\documentclass{amsart}
\usepackage{amssymb}
\usepackage[utf8]{inputenc} 
\usepackage[english]{babel} 

\theoremstyle{plain} 
\newtheorem{thm}{Theorem} 
\newtheorem{lem}[thm]{Lemma} 
\newtheorem{cor}[thm]{Corollary}

\theoremstyle{definition}

\theoremstyle{remark} 
 
\newtheorem{que}{Question} 
\newtheorem{conj}{Conjecture}

\DeclareMathOperator{\mre}{Re}  

\begin{document} 
\title{Contractive inequalities for Hardy spaces} 
\date{\today} 

\author[Brevig]{Ole Fredrik Brevig} \address{Department of Mathematical Sciences, Norwegian University of Science and Technology, NO-7491 Trondheim, Norway} \email{ole.brevig@ntnu.no}

\author[Ortega-Cerd\`{a}]{Joaquim Ortega-Cerd\`{a}} \address{Department de Matem\`{a}tiques i Inform\`{a}tica, Universitat de Barcelona \& Barcelona Graduate school in mathematics, Gran Via 585, 08007 Barcelona, Spain} \email{jortega@ub.edu}

\author[Seip]{Kristian Seip} \address{Department of Mathematical Sciences, Norwegian University of Science and Technology, NO-7491 Trondheim, Norway} \email{kristian.seip@ntnu.no}

\author[Zhao]{Jing Zhao} \address{Department of Mathematical Sciences, Norwegian University of Science and Technology, NO-7491 Trondheim, Norway} \email{jingzh95@me.com}

\thanks{The research of Brevig, Seip, and Zhao was supported by Grant 227768 of the Research Council of Norway. The research of Ortega-Cerd\`{a} was supported by grant  MTM2014-51834-P  of the
Ministerio de Econom\'{\i}a y Competitividad,
and by the Generalitat de Catalunya (project 2014 SGR 289).}

\subjclass[2010]{Primary 30H10. Secondary 42A05.}

\begin{abstract}
We state and discuss several interrelated results, conjectures, and questions regarding contractive inequalities for classical $H^p$ spaces of the unit disc. We study both coefficient estimates in terms of weighted $\ell^2$ sums and the Riesz projection viewed as a map from $L^q$ to $H^p$ with $q\ge p$. Some numerical evidence is given that supports our conjectures. 
\end{abstract}
\dedicatory{Dedicated to Professor Pawe{\l} Doma\'{n}ski in memoriam}

\maketitle
\section{Introduction} 
This paper deals with certain contractive inequalities for the classical Hardy spaces $H^p$ of the unit disc $\mathbb{D}$, where as usual $f$ belongs to $H^p$ for $0<p<\infty$ if $f$ is analytic in $\mathbb{D}$ and 
\[ \| f\|_{H^p}:=\sup_{0<r<1} \left(\int_{0}^{2\pi} |f(re^{i\theta})|^p \frac{d\theta}{2\pi}\right)^{\frac{1}{p}}<\infty. \] Carleman's inequality (see \cite{Vukotic03} for an excellent exposition), which states that 
\begin{equation} \label{eq:Carl} \left(\sum_{n=0}^\infty \frac{|a_n|^2}{n+1}\right)^\frac{1}{2} \leq \|f\|_{H^1}\end{equation}
for $f(z)=\sum_{n\geq0} a_n z^n$, is a prototypical example of the kind of inequality we are interested in. It is well known that \eqref{eq:Carl} belongs to a family of inequalities that appear in the following way. For $\alpha\geq1$, define  $c_\alpha$ as the coefficient sequence of the binomial series 
\begin{equation}\label{eq:binomial} 
	\frac{1}{(1-z)^\alpha} = \sum_{n=0}^\infty c_\alpha(n) z^n, \qquad c_\alpha(n) := \binom{n+\alpha-1}{n}. 
\end{equation}
Burbea~\cite{Burbea87} proved the following extension of Carleman's inequality (which is the case $p=1$). If $p = 1/k$ for some integer positive $k$, then
\begin{equation}\label{eq:burbea} 
	\left(\sum_{n=0}^\infty \frac{|a_n|^2}{c_{2/p}(n)}\right)^\frac{1}{2} \leq \|f\|_{H^p}. 
\end{equation}
This paper is inspired by the following. 
\begin{conj}\label{conj:burbea} 
	The inequality \eqref{eq:burbea} holds for every $0<p\leq2$. 
\end{conj}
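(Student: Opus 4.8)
The plan is to isolate the role of the parameter $\alpha := 2/p \ge 1$ and to treat the conjecture as an interpolation problem in $\alpha$, anchored at the integer values where it can be settled outright. Write $\|f\|_\alpha^2 := \sum_{n\ge 0}|a_n|^2/c_\alpha(n)$, so that the assertion is $\|f\|_\alpha \le \|f\|_{H^p}$. The engine I would use is the contractive submultiplicativity
\[
\|uv\|_{\alpha+\beta} \le \|u\|_\alpha\,\|v\|_\beta ,
\]
which follows by combining the Vandermonde-type identity $c_{\alpha+\beta}(n)=\sum_{j+k=n}c_\alpha(j)c_\beta(k)$ (read off from $(1-z)^{-\alpha}(1-z)^{-\beta}=(1-z)^{-(\alpha+\beta)}$) with the Cauchy--Schwarz inequality applied to the Cauchy product $\sum_{j+k=n}\widehat{u}(j)\,\widehat{v}(k)$ defining the $n$th Taylor coefficient of $uv$. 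Since $c_1(n)\equiv 1$ gives $\|u\|_1=\|u\|_{H^2}$, iterating this bound $m-1$ times yields $\|g^m\|_m\le\|g\|_{H^2}^m$ for every positive integer $m$; this is precisely \eqref{eq:burbea} for $p=2/m$, recovering Burbea's theorem and extending it to odd denominators.

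For non-integer $\alpha$ I would first reduce to a single clean inequality by the \emph{power trick}. Factoring $f=B\,h$ into inner and outer parts, with $\|h\|_{H^p}=\|f\|_{H^p}$, it suffices (after disposing of the inner factor, see below) to treat zero-free $f$. For such $f$ set $g:=f^{p/2}$, a zero-free element of $H^2$ with $\|g\|_{H^2}^2=\|f\|_{H^p}^p$ and $f=g^\alpha$. The conjecture then becomes the single assertion $\|g^\alpha\|_\alpha \le \|g\|_{H^2}^\alpha$, or, in weighted Bergman form for $\alpha>1$,
\[
(\alpha-1)\int_{\mathbb D}|g(z)|^{2\alpha}\,(1-|z|^2)^{\alpha-2}\,\frac{dA(z)}{\pi}\ \le\ \|g\|_{H^2}^{2\alpha}.
\]
The integer cases above, the case $\alpha=2$ (Carleman for $g^2$), and the limiting identity at $\alpha=1$ (where the probability measure $(\alpha-1)(1-|z|^2)^{\alpha-2}\,dA/\pi$ tends weakly to normalized arc length) are all consistent with this.

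It is tempting to fill in the non-integer $\alpha$ by convexity, but this is where I expect the \textbf{main obstacle} to lie, and I believe the naive version of it is simply false: a short computation with $g=1+\varepsilon z$ gives $\log\|g^\alpha\|_\alpha^2 = \alpha\varepsilon^2 + \tfrac{1}{2}\bigl(-3\alpha+4-\tfrac{4}{\alpha+1}\bigr)\varepsilon^4 + O(\varepsilon^6)$, whose second $\alpha$-derivative is negative, so $\alpha\mapsto\log\|g^\alpha\|_\alpha$ is, to leading order, \emph{concave} rather than convex. Hence one cannot deduce the intermediate cases from the integer ones by a three-lines or Stein-interpolation argument. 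The right target appears instead to be the stronger monotonicity that $\alpha\mapsto\|g^\alpha\|_\alpha^{1/\alpha}$ is non-increasing on $[1,\infty)$ (the same expansion gives $\tfrac1\alpha\log\|g^\alpha\|_\alpha^2=\varepsilon^2+\tfrac{\varepsilon^4}{2}(-3+\tfrac{4}{\alpha+1})+\cdots$, which is decreasing); since its value at $\alpha=1$ is $\|g\|_{H^2}$, this would yield the conjecture for all $\alpha\ge1$ at once.

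I would attempt to prove this monotonicity by differentiating the Bergman integral in $\alpha$ and seeking a Green's-formula or integration-by-parts identity for $\frac{d}{d\alpha}\|g^\alpha\|_\alpha^{1/\alpha}$; the difficulty is that the integrand $|g|^{2\alpha}$, the weight $(1-|z|^2)^{\alpha-2}$, and the normalization $(\alpha-1)$ all vary with $\alpha$ at once. Any successful argument must exploit that $\log|g|$ is \emph{harmonic} and not merely that $|g|^2$ is subharmonic: replacing $|g|^2$ by its least harmonic majorant reduces the desired bound to Jensen's inequality applied with the wrong sign, so the statement genuinely encodes the analyticity of $g$. The remaining, secondary difficulty is the reduction to zero-free $f$: since multiplication by an inner function need not be a contraction for $\|\cdot\|_\alpha$ when $\alpha>1$, justifying that the inner factor may be discarded — equivalently, that the extremal $f$ are zero-free — will require a separate argument, presumably via a variational analysis of the ratio $\|f\|_\alpha/\|f\|_{H^p}$.
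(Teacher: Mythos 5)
The statement you are asked to prove is labelled as a conjecture, and the paper does not prove it; it remains open there. Your proposal is accordingly not a proof but a programme, and you are candid about where it stops. What you do establish coincides with the paper's partial results, obtained the same way: your submultiplicativity $\|uv\|_{\alpha+\beta}\le\|u\|_\alpha\|v\|_\beta$ is exactly Lemma~\ref{lem:logconvex} (Vandermonde identity plus Cauchy--Schwarz), its iteration gives the integer cases of Theorem~\ref{thm:carlemanburbea} and Corollary~\ref{cor:burbeaext}, and your reduction via $g=f^{p/2}$ to the single inequality $\|g\|_{A^{2\alpha}_\alpha}\le\|g\|_{H^2}$ is the paper's reformulation \eqref{eq:reform}. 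Your proposed target --- monotonicity of $\alpha\mapsto\|g^\alpha\|_\alpha^{1/\alpha}$ --- is essentially the paper's Question~\ref{que:decreasing} on $\partial_\alpha U_f(\alpha)\le 0$, which the paper likewise cannot answer (it only shows $U_f(\alpha+1)\le U_f(\alpha)$ and eventual decrease). One genuinely useful addition on your part is the explicit expansion for $g=1+\varepsilon z$ showing that $\alpha\mapsto\log\|g^\alpha\|_\alpha$ is concave to leading order, which explains structurally why no three-lines or Stein-type argument can interpolate the integer cases; the paper makes the same negative point differently, by exhibiting the interpolation constant in Lemma~\ref{lem:interpolation} and noting it exceeds $1$ (cf.\ Theorem~\ref{thm:smallconstant}).

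One concrete error: your final paragraph treats the reduction to zero-free $f$ as an open difficulty, asserting that multiplication by an inner function need not be a contraction for $\|\cdot\|_\alpha$ when $\alpha>1$. This is backwards. For $\alpha>1$ one has $\|h\|_\alpha^2=(\alpha-1)\int_{\mathbb D}|h(z)|^2(1-|z|^2)^{\alpha-2}\,dxdy/\pi$, so if $f=Bh$ with $B$ inner and $h$ outer, the pointwise bound $|f(z)|\le|h(z)|$ on $\mathbb D$ gives $\|f\|_\alpha\le\|h\|_\alpha$ immediately, while $\|f\|_{H^p}=\|h\|_{H^p}$; no variational or extremal argument is needed, and the paper dispatches this in one sentence. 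The remaining content of your proposal --- proving the monotonicity in $\alpha$ by differentiating the Bergman integral --- is precisely where the paper also gets stuck, so you have not closed the gap; for the paper's own further attempts see the weak-type estimate of Section~\ref{sec:measure} (Theorem~\ref{thm:meas} and Conjecture~\ref{conj:measure}), which would imply the conjecture if the constant could be brought down to $1$.
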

Our interest in Conjecture~\ref{conj:burbea} arose from a number theoretic application (see \cite{BBSSZ,BHS}), which in turn rests on the recognition of Bayart \cite{Ba} and later of Helson \cite{He} that contractive inequalities like those above may ``lift'' multiplicatively to yield interesting inequalities for Hardy spaces on the infinite-dimensional torus. As an example, we mention that Helson showed that \eqref{eq:Carl} implies a multiplicative counterpart that takes the form
\[ \left(\sum_{n=1}^{N} \frac{|b_n|^2}{d(n)}\right)^{1/2} \le \lim_{T\to \infty}\frac{1}{T}\int_{0}^T
\left| \sum_{n=1}^{N} b_n n^{-it}\right| dt, \]
where $d(n)$ is the number of divisors of $n$ and $N$ is an arbitrary positive integer.

When $p=2$, the inequality \eqref{eq:burbea} is simply an identity, since $c_1(n)=1$. That \eqref{eq:burbea} holds with some constant $C_p\geq1$ on the right hand side for every $0<p\leq2$ goes back to Hardy and Littlewood \cite{HL32}. 

The key to understanding \eqref{eq:burbea} is to observe that the left-hand side is in fact equal to the norm $\|f\|_{A^2_{2/p}}$. Here $A^p_\alpha$ is the Bergman space of the unit disc, defined for $0<p<\infty$ and $\alpha>1$ as the closure of the set of analytic polynomials with respect to the (quasi)-norm 
\begin{equation}\label{eq:Bergmannorm} 
	\|f\|_{A^p_\alpha} := \left(\int_{\mathbb{D}}|f(z)|^p\,(\alpha-1)(1-|z|^2)^\alpha \,d\mu(z)\right)^\frac{1}{p}. 
\end{equation}
In \eqref{eq:Bergmannorm} and the remainder of this paper, $\mu$ denotes the M\"obius invariant measure of the unit disc, defined for $z=x+iy$ by
\[d\mu(z) := \frac{1}{(1-|z|^2)^2}\frac{dxdy}{\pi}.\]
In light of \eqref{eq:Bergmannorm}, we note that the left hand side of \eqref{eq:burbea} becomes larger if we factor out the inner part of $f$. Setting $\alpha=2/p$ and replacing $f$ with $f^\alpha$ when $f$ is an outer function, we get the following equivalent inequality 
\begin{equation}\label{eq:reform} 
	\|f\|_{A^{2\alpha}_\alpha} = \left(\int_{\mathbb{D}}|f(z)|^{2\alpha}\,(\alpha-1)(1-|z|^2)^\alpha\, d\mu(z)\right)^\frac{1}{2\alpha}\leq\|f\|_{H^2}. 
\end{equation}
Note that Burbea's result is equivalent to the statement that \eqref{eq:reform} holds when $\alpha=k$ is a positive integer, and his result is indeed proved in this formulation. This is also the approach used by Carleman. 

We provide four conjectures and several questions. We were initially inspired by Conjecture~\ref{conj:burbea} and its symmetric companion (Conjecture~\ref{conj:dual}), both of which have been considered by others before (see e.g.~\cite[Sec.~2.5]{Pavlovic}). However, we have found our new Conjectures~\ref{conj:measure} and \ref{conj:riesz} to be more interesting. As will become clear, Conjecture~\ref{conj:measure} implies Conjecture~\ref{conj:burbea}, while the combination of Conjecture~\ref{conj:burbea} and Conjecture~\ref{conj:riesz} implies Conjecture~\ref{conj:dual}. 

This paper is organized into four further sections. In the next section, we will revisit Burbea's proof and demonstrate how it follows from a log-convexity result about the norms of the Bergman spaces $A^2_\alpha$. We will investigate the formulation \eqref{eq:reform} using calculus and duality arguments, and try to illuminate the main difficulties. In Section~\ref{sec:measure} we will discuss one possible line of attack by formulating a conjecture about a weak--type estimate for the M\"obius invariant measure $\mu$. The symmetric companion to Conjecture~\ref{conj:burbea} can be found in Section~\ref{sec:dual}, where we also formulate Conjecture~\ref{conj:riesz} regarding contractivity of the Riesz projection. In the final section, we provide numerical evidence for our conjectures and questions.

\section{Related inequalities and main difficulties} \label{sec:related} 
It is well-known that $H^p$ is the limit of $A^p_\alpha$ when $\alpha\to 1^+$ in the sense that if $f$ is in $H^p$, then $f $ is in $A^p_\alpha$ for every $\alpha>1$ and
\[\lim_{\alpha\to1^+} \|f\|_{A^p_\alpha} = \|f\|_{H^p}.\]
We therefore adopt the convention $A^p_1 = H^p$. We can rephrase \eqref{eq:reform} as the apparently more general inequality
\begin{equation}\label{eq:general} 
	\|f\|_{A^{p\alpha}_{\alpha}} \leq C_\alpha^{2/p} \|f\|_{H^p},
\end{equation}
with $C_\alpha=1$; this is achieved by observing again that we may replace $f$ by $f^{p/2}$ when $f$ is an outer function and that the left-hand side becomes larger if we factor out the inner part of $f$. 

Let $\alpha>1$ and $0<p<\infty$ be fixed. We will verify that \eqref{eq:general} is best possible for both contractivity and boundedness, and consider therefore the inequality 
\begin{equation}\label{eq:abineq} 
	\|f\|_{A^{p\alpha}_\beta} \leq C \|f\|_{H^p}. 
\end{equation}
Suppose that some choice of $\beta$ gives that \eqref{eq:abineq} holds with $C=1$. Setting $f(z)=1+\varepsilon z$ and letting $\varepsilon\to 0^+$, we can by a straightforward computation show that $\beta\geq\alpha$ is a necessary condition. Moreover, if \eqref{eq:abineq} holds for some $C\geq1$, then setting $f(z)=(1-rz)^{-2/p}$ and letting $r \to 1^-$, we get that $\beta\geq\alpha$ is necessary for mere boundedness as well.

The fact that \eqref{eq:general} holds with some constant $C_\alpha\geq1$ goes back to Hardy and Littlewood \cite[Thm.~31]{HL32}. We refer to \cite[Sec.~4.1]{Pavlovic} for a modern treatment of \eqref{eq:general} and related results, with a proof relying on Marcinkiewicz interpolation and Bourgain's decomposition lemma \cite{Bourgain48}. It must be stressed that these proofs do not give the desired constant $C_\alpha=1$ for any $\alpha>1$. In both cases, the problem seems to be that something is lost when decomposing an analytic function. Note that the case $\alpha=1$ in \eqref{eq:general} is simply the identity $\|f\|_{A^p_1} = \|f\|_{H^p}$. Carleman's inequality states that $C_2=1$.
\begin{thm}[Carleman--Burbea \cite{Burbea87,Vukotic03}] \label{thm:carlemanburbea} 
	The inequality \eqref{eq:general} holds with $C_\alpha=1$ when $\alpha=k$ is a positive integer. 
\end{thm}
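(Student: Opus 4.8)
The plan is to prove the contractive inequality for integer $\alpha=k$, which by the discussion above is equivalent to establishing \eqref{eq:reform} with $\alpha=k$, namely
\[
\int_{\mathbb{D}}|f(z)|^{2k}\,(k-1)(1-|z|^2)^k\,d\mu(z)\leq\|f\|_{H^2}^{2k}
\]
for analytic polynomials $f$. Writing $F=f^k$, the left-hand side is $\|F\|_{A^2_k}^2$, which by \eqref{eq:burbea}–\eqref{eq:Bergmannorm} equals the weighted $\ell^2$ sum $\sum_{n\geq0}|A_n|^2/c_k(n)$, where $A_n$ are the Taylor coefficients of $F=f^k$. So the real content is the statement that for $f(z)=\sum_{m\geq0}a_m z^m$ with Taylor coefficients $A_n$ of $f^k$, one has $\sum_n |A_n|^2/c_k(n)\leq\|f\|_{H^2}^{2k}=\big(\sum_m|a_m|^2\big)^k$.

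First I would set up the combinatorial identity underlying the weights. The key algebraic fact is that the reproducing kernel for $A^2_k$ is $(1-z\overline{w})^{-k}$, whose diagonal coefficients are exactly $c_k(n)=\binom{n+k-1}{n}$; equivalently, for the multinomial expansion of $f^k$ the natural normalization produces these binomial coefficients. Concretely, I would expand $f^k=\sum_n A_n z^n$ with $A_n=\sum_{m_1+\cdots+m_k=n}a_{m_1}\cdots a_{m_k}$ and aim to show, by Cauchy–Schwarz adapted to the weight $c_k(n)$, that $|A_n|^2\leq c_k(n)\sum_{m_1+\cdots+m_k=n}|a_{m_1}|^2\cdots|a_{m_k}|^2$. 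This is precisely the Cauchy–Schwarz inequality over the $c_k(n)$ tuples $(m_1,\dots,m_k)$ summing to $n$, since the number of such compositions is $\binom{n+k-1}{k-1}=c_k(n)$. Dividing by $c_k(n)$ and summing over $n$ then collapses the right-hand side, by convolution, to $\big(\sum_m|a_m|^2\big)^k$, which is exactly $\|f\|_{H^2}^{2k}$.

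The cleanest route, and the one I expect to be decisive, is therefore to recognize this as a statement about the multiplicativity of the $A^2_k$ norm under the identification $F=f^k$: the log-convexity of the Bergman norms $\alpha\mapsto\|f\|_{A^2_\alpha}$ promised in the introduction should reduce the whole family to the single multiplicative step $\|f^k\|_{A^2_k}\leq\|f\|_{H^2}^k$, which unwinds to the Cauchy–Schwarz count above. An alternative is Burbea's own inductive approach: prove the case $k+1$ from the case $k$ by writing $f^{k+1}=f\cdot f^k$ and controlling the coefficients of a product via a convolution inequality tailored to the ratio $c_{k+1}(n)/c_k(n)$. I would keep this induction in reserve as a cross-check.

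The main obstacle, as the authors themselves flag, is not the integer case per se but ensuring that every inequality used is genuinely \emph{contractive} rather than merely bounded: interpolation-based proofs lose the constant, so one must avoid any decomposition of the analytic function. The delicate point is verifying that the combinatorial Cauchy–Schwarz step saturates correctly, i.e.\ that the compositions of $n$ into $k$ nonnegative parts are counted by exactly $c_k(n)$ and that the convolution $\sum_n c_k(n)^{-1}\big(\sum_{\sum m_i=n}\prod|a_{m_i}|^2\big)$ telescopes without slack to the $k$-th power of $\|f\|_{H^2}^2$. Once the counting is pinned down, the contractivity is automatic because Cauchy–Schwarz contributes constant $1$; the entire difficulty is thus concentrated in correctly matching the binomial weights to the multinomial coefficient structure of $f^k$.
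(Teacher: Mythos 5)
Your proof is correct and is essentially the paper's own argument: the paper proves a slightly more general log-convexity lemma for products $f_1\cdots f_k$ in $A^2_{\alpha_j}$ via the identity $c_{\alpha_1+\cdots+\alpha_k}(n)=\sum_{n_1+\cdots+n_k=n}c_{\alpha_1}(n_1)\cdots c_{\alpha_k}(n_k)$ and Cauchy--Schwarz, and your argument is exactly the specialization $\alpha_1=\cdots=\alpha_k=1$, $f_1=\cdots=f_k=f$, where that identity reduces to your count $c_k(n)=\binom{n+k-1}{k-1}$ of compositions. The coefficient expansion, the combinatorial Cauchy--Schwarz, and the collapse of the convolution to $\bigl(\sum_m|a_m|^2\bigr)^k$ all match the paper's proof.
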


As far as we are aware, it is not known whether $C_\alpha=1$ holds for any other $\alpha$. See also \cite[Sec.~2.5]{Pavlovic} for some discussion of this, and note that our Conjecture~\ref{conj:burbea} is equivalent to \cite[Prob.~2.1]{Pavlovic}. To prepare for a general version of Theorem~\ref{thm:carlemanburbea}, we begin with the following log-convexity result regarding the Bergman space norms.
\begin{lem}\label{lem:logconvex} 
	For $1\leq j \leq k$ let $f_j \in A^2_{\alpha_j}$ with $\alpha_j\geq1$. Set $f: = f_1 f_2 \cdots f_k$ and $\alpha: = \alpha_1+\alpha_2+\cdots + \alpha_k$. Then 
	\begin{equation}\label{eq:logconvex} 
		\|f\|_{A^{2}_\alpha} \leq \|f_1\|_{A^2_{\alpha_1}}\|f_2\|_{A^2_{\alpha_2}}\cdots \|f_k\|_{A^2_{\alpha_k}}. 
	\end{equation}
	Equality in \eqref{eq:logconvex} occurs if and only if there are constants $\lambda_j \in \mathbb{C}$ and $w \in \mathbb{D}$ such that $f_j(z) = \lambda_j(1-wz)^{-\alpha_j}$ for every $1 \leq j \leq k$. 
\end{lem}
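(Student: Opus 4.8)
The plan is to pass to Taylor coefficients and reduce the whole statement to a single application of the Cauchy--Schwarz inequality, one coefficient at a time. Writing $f_j(z)=\sum_{n\ge0}a_j(n)z^n$, the identity between the Bergman norm and the weighted $\ell^2$ sum recorded in the introduction gives $\|f_j\|_{A^2_{\alpha_j}}^2=\sum_{n\ge0}|a_j(n)|^2/c_{\alpha_j}(n)$, and likewise $\|f\|_{A^2_\alpha}^2=\sum_{n\ge0}|a(n)|^2/c_\alpha(n)$, where $a(n)=\sum_{n_1+\cdots+n_k=n}\prod_{j=1}^k a_j(n_j)$ is the $n$-th coefficient of the product. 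The combinatorial input is the convolution identity $c_\alpha(n)=\sum_{n_1+\cdots+n_k=n}\prod_{j=1}^k c_{\alpha_j}(n_j)$, which is immediate from multiplying the binomial series \eqref{eq:binomial} for the exponents $\alpha_1,\dots,\alpha_k$, whose sum is $\alpha$.

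First I would fix $n$ and apply Cauchy--Schwarz to the sum defining $a(n)$, splitting each summand as $\prod_j a_j(n_j)=\bigl(\prod_j a_j(n_j)/\sqrt{c_{\alpha_j}(n_j)}\bigr)\bigl(\prod_j\sqrt{c_{\alpha_j}(n_j)}\bigr)$. Using the convolution identity to evaluate the second factor, this yields
\[
\frac{|a(n)|^2}{c_\alpha(n)}\le\sum_{n_1+\cdots+n_k=n}\ \prod_{j=1}^k\frac{|a_j(n_j)|^2}{c_{\alpha_j}(n_j)}.
\]
Summing over $n\ge0$ and interchanging the (nonnegative) sums by Tonelli, the right-hand side becomes the unrestricted sum over $(n_1,\dots,n_k)\in\mathbb{Z}_{\ge0}^k$, which factors as $\prod_{j=1}^k\sum_{n_j\ge0}|a_j(n_j)|^2/c_{\alpha_j}(n_j)=\prod_{j=1}^k\|f_j\|_{A^2_{\alpha_j}}^2$. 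Taking square roots gives \eqref{eq:logconvex}.

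The interesting part is the equality statement, which I would extract by tracking when Cauchy--Schwarz is sharp. Discarding the trivial case where some $f_j$ vanishes identically, assume every $f_j\not\equiv0$. Equality in \eqref{eq:logconvex} forces equality in the displayed inequality for every $n$; since $c_{\alpha_j}(n)>0$, sharpness of Cauchy--Schwarz means that for each $n$ there is a scalar $\lambda_n$ with $\prod_{j}a_j(n_j)=\lambda_n\prod_j c_{\alpha_j}(n_j)$ for all compositions $n_1+\cdots+n_k=n$. Setting $r_j(n):=a_j(n)/c_{\alpha_j}(n)$, this says precisely that $\prod_j r_j(n_j)$ depends only on $n_1+\cdots+n_k$. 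Let $m_j$ denote the least index with $r_j(m_j)\ne0$. Comparing the composition $(m_1,\dots,m_k)$, whose product is nonzero, with $(m_1-1,m_2+1,m_3,\dots,m_k)$, whose product vanishes and which has the same total, forces $m_1=0$; by symmetry every $m_j=0$, so each $r_j(0)\ne0$. Comparing $(1,0,\dots,0)$ with $(0,1,0,\dots,0)$ then shows the ratios $r_j(1)/r_j(0)$ all equal one common value $w$, and comparing $(n,0,\dots,0)$ with $(n-1,1,0,\dots,0)$ gives the recursion $r_j(n)=w\,r_j(n-1)$. Hence $r_j(n)=\lambda_j w^n$ with $\lambda_j:=a_j(0)$, that is $f_j(z)=\lambda_j\sum_{n\ge0}c_{\alpha_j}(n)(wz)^n=\lambda_j(1-wz)^{-\alpha_j}$; here $|w|<1$ because $(1-wz)^{-\alpha_j}$ lies in $A^2_{\alpha_j}$ only for $w\in\mathbb{D}$, and conversely one checks directly that such functions saturate \eqref{eq:logconvex}.

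The main obstacle is entirely in this last step: the inequality itself is a one-line consequence of Cauchy--Schwarz, but converting the coefficient-wise equality conditions into the clean closed form requires the bookkeeping above---locating the lowest nonvanishing coefficients, ruling out $m_j>0$, and checking that a single $w$ serves all $j$ simultaneously and lies in $\mathbb{D}$.
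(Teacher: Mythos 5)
Your proof is correct and follows essentially the same route as the paper: expand the product at the level of Taylor coefficients, use the convolution identity \eqref{eq:sumprod} for $c_\alpha$, and apply the Cauchy--Schwarz inequality one coefficient at a time. The only difference is that the paper reduces to the case $k=2$ and declares the equality statement ``obvious,'' whereas you treat general $k$ directly and carry out the equality analysis in full; that bookkeeping (locating the lowest nonvanishing coefficients, forcing a common ratio $w$, and checking $w\in\mathbb{D}$) is sound.
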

\begin{proof}
	We will rely on the following formula,
	\begin{equation}\label{eq:sumprod} 
		c_{\alpha_1+\alpha_2+\cdots+\alpha_k}(n) = \sum_{n_1+n_2+\cdots + n_k = n} c_{\alpha_1}(n_1)c_{\alpha_2}(n_2)\cdots c_{\alpha_k}(n_k).
	\end{equation}
	Note that \eqref{eq:sumprod} follows at once from \eqref{eq:binomial}. It is sufficient to prove \eqref{eq:logconvex} only in the case $k=2$. Thus, let $f=gh$ where $g(z) = \sum_{n\geq0} a_n z^n$ and $h(z) = \sum_{n\geq0} b_n z^n$. We expand the left hand side of \eqref{eq:logconvex} at the level of coefficients, and use the Cauchy--Schwarz inequality with \eqref{eq:sumprod} to get 
	\begin{align*}
		\|f\|_{A^2_\alpha} &= \left(\sum_{n=0}^\infty \frac{1}{c_\alpha(n)}\left|\sum_{n_1+n_2=n}a_{n_1} b_{n_2} \right|^2\right)^\frac{1}{2} \\
		&\leq \left(\sum_{n=0}^\infty \sum_{n_1+n_2=n} \frac{|a_{n_1}|^2}{c_{\alpha_1}(n_1)} \frac{|b_{n_2}|^2}{c_{\alpha_2}(n_2)}\right)^\frac{1}{2} = \|g\|_{A^2_{\alpha_1}}\|h\|_{A^2_{\alpha_2}}, 
	\end{align*}
	where we used that $\alpha=\alpha_1+\alpha_2$. The second statement is now obvious. 
\end{proof}

The following result follows at once from Lemma~\ref{lem:logconvex} and contains Theorem~\ref{thm:carlemanburbea} as a consequence of the tautology $A^p_1 = H^p$.
\begin{cor}\label{cor:burbeaext} 
	Suppose that \eqref{eq:general} holds for some pair of parameters $0<p<\infty$ and $1\leq \alpha < \infty$. Then $\| f\|_{A^{kp\alpha}_{k\alpha}}\le C_{\alpha}^{2/p} \| f\|_{H^p}$ holds for every $0<p<\infty$ and every positive integer $k$. In particular, starting from $A^p_1 = H^p$ we get
	\[\|f\|_{A^{kp}_k} \leq \|f\|_{H^p}\]
	for every positive integer $k$. 
\end{cor}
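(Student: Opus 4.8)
The plan is to deduce the corollary from Lemma~\ref{lem:logconvex} by specializing all factors in \eqref{eq:logconvex} to a single common function and then removing the restriction to the $A^2$ norm through a change of exponent. First I would reduce to the case of an outer function. Given an arbitrary $f\in H^p$, factor $f = I\cdot O$ into its inner and outer parts; since $|I|\leq 1$ throughout $\mathbb{D}$, the integrand in \eqref{eq:Bergmannorm} only grows when we drop $I$, so $\|f\|_{A^{kp\alpha}_{k\alpha}} \leq \|O\|_{A^{kp\alpha}_{k\alpha}}$, while $\|f\|_{H^p} = \|O\|_{H^p}$ because $|I|=1$ almost everywhere on the boundary. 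Hence it suffices to prove the inequality for outer functions, for which fractional powers are well defined and analytic.

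With $f$ outer, I set $g := f^{p\alpha/2}$. A direct change of the exponent in \eqref{eq:Bergmannorm} gives the identities $\|g^k\|_{A^2_{k\alpha}} = \|f\|_{A^{kp\alpha}_{k\alpha}}^{kp\alpha/2}$ and $\|g\|_{A^2_\alpha} = \|f\|_{A^{p\alpha}_\alpha}^{p\alpha/2}$; in particular $g\in A^2_\alpha$ precisely when $\|f\|_{A^{p\alpha}_\alpha}$ is finite, which by \eqref{eq:general} holds whenever $f\in H^p$ (and if $\|f\|_{H^p}=\infty$ the claim is vacuous). Applying Lemma~\ref{lem:logconvex} with $f_1=\cdots=f_k=g$ and $\alpha_1=\cdots=\alpha_k=\alpha$ yields $\|g^k\|_{A^2_{k\alpha}}\leq \|g\|_{A^2_\alpha}^k$, which upon inserting the two identities and taking a $(kp\alpha/2)$-th root collapses to the clean monotonicity
\[ \|f\|_{A^{kp\alpha}_{k\alpha}} \leq \|f\|_{A^{p\alpha}_\alpha}. \]

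To finish, I would chain this with the hypothesis \eqref{eq:general}, obtaining $\|f\|_{A^{kp\alpha}_{k\alpha}} \leq \|f\|_{A^{p\alpha}_\alpha} \leq C_\alpha^{2/p}\|f\|_{H^p}$ for outer $f$, and then transfer back to general $f\in H^p$ through the factorization above. The displayed special case is then the instance $\alpha=1$ with $C_1=1$, using the convention $A^p_1=H^p$. Since the essential work is carried by Lemma~\ref{lem:logconvex}, I do not expect a genuine obstacle here; the only point requiring care is the passage to outer functions, which is what legitimizes the non-integer substitution $g=f^{p\alpha/2}$ and is forced on us because the Cauchy--Schwarz step in the lemma ties every factor, and hence every exponent, to the $A^2$ norm.
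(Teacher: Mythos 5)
Your proof is correct and is essentially the argument the paper intends: the paper says the corollary ``follows at once'' from Lemma~\ref{lem:logconvex}, and the unwritten details are precisely your steps --- the inner--outer reduction, the substitution $g=f^{p\alpha/2}$ for outer $f$, and Lemma~\ref{lem:logconvex} with $k$ identical factors, which collapses to the monotonicity $\|f\|_{A^{kp\alpha}_{k\alpha}}\le\|f\|_{A^{p\alpha}_{\alpha}}$ before chaining with \eqref{eq:general}. The only unspoken point is that you invoke \eqref{eq:general} at the target exponent $p$ while the hypothesis grants it at a single $p_0$; this transfer across $p$ is the same power-substitution trick (applied to $f^{p/p_0}$ for outer $f$) and is exactly how the paper itself treats \eqref{eq:general} when introducing it, so it is not a genuine gap.
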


It is also natural to ask if complex interpolation can be used to extend Theorem~\ref{thm:carlemanburbea} to non-integer values of $\alpha$. However, when we interpolate between $H^p$ spaces, a constant of interpolation appears (see \cite[pp.~18--19]{BL76} and \cite{SS48}). This constant is a direct consequence of the fact that we work with analytic functions, and do not appear when interpolating between the usual $L^p$ spaces. This seems again to be related to the decompositions needed for the interpolation machinery to work. 

For two pairs of \emph{compatible} Banach spaces $(X_0,X_1)$ and $(Y_0,Y_1)$ and an operator $T$ which is contractive from $X_0$ to $Y_0$ and $X_1$ to $Y_1$, we know that $T$ defines a contraction from the interpolation spaces $X_\theta = [X_0,X_1]_\theta$ to the interpolation spaces $Y_\theta = [Y_0,Y_1]_\theta$, for $0<\theta<1$. The statement about Hardy spaces made above is that $[H^p,H^q]_\theta \cong H^r$ for some suitable $r$ between $p$ and $q$, but the norms are equivalent and not equal. This means that it is impossible to obtain contractive results in this way. 

However, by fixing $p=2$ so that $[H^2,H^2]_\theta = H^2$ we use the formulation \eqref{eq:reform}, and interpolate between the cases $\alpha=2,3,\ldots$ to retain contractive estimates. Note that the case $\alpha=1$ has to be excluded since the norm is supported on $\mathbb{T}$ and not in $\mathbb{D}$ --- or more technically, the Dirac measure is not absolutely continuous with respect to the Lebesgue measure (see \cite[Chap.~5]{BL76}). 

This interpolation procedure can also be carried out directly using the three lines lemma as in the proof of the Riesz--Thorin theorem, but we give a shorter (and essentially equivalent) formulation using interpolation spaces. 
 Let $[\alpha]$ denote the integer part of the positive real number $\alpha$ and let $\{\alpha\}$ denote the fractional part of $\alpha$, so that $\alpha=[\alpha]+\{\alpha\}$.
\begin{lem}\label{lem:interpolation} 
	Let $\alpha\geq2$ and suppose that $f$ is  in $H^2$. Then
	\[\|f\|_{A^{2\alpha}_\alpha} \leq \left(\frac{\alpha-1}{\left([\alpha]-1\right)^{1-\{\alpha\}}\left([\alpha] \right)^{\{\alpha\}}}\right)^\frac{1}{2\alpha} \|f\|_{H^2}.\]
\end{lem}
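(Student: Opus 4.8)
The plan is to recast the $A^{2\alpha}_\alpha$ quasi-norm as the integral of a single power against the M\"obius invariant measure, so that the desired bound becomes a log-convexity (Lyapunov) inequality interpolated between consecutive integers. First I would unfold the definition: since $d\mu(z) = (1-|z|^2)^{-2}\frac{dx\,dy}{\pi}$, one has
\[\|f\|_{A^{2\alpha}_\alpha}^{2\alpha} = (\alpha-1)\int_{\mathbb{D}} \bigl(|f(z)|^2(1-|z|^2)\bigr)^\alpha \, d\mu(z).\]
Writing $\phi(z) := |f(z)|^2(1-|z|^2)\geq 0$ and $J(\alpha) := \int_{\mathbb{D}} \phi(z)^\alpha \, d\mu(z)$, the inequality to be proved is exactly $(\alpha-1)J(\alpha) \leq \frac{\alpha-1}{([\alpha]-1)^{1-\{\alpha\}}([\alpha])^{\{\alpha\}}}\|f\|_{H^2}^{2\alpha}$, so everything reduces to controlling $J(\alpha)$.

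The key structural observation is that $\alpha \mapsto \log J(\alpha)$ is convex. Indeed, for $\alpha = (1-\theta)\alpha_0 + \theta\alpha_1$, H\"older's inequality with conjugate exponents $1/(1-\theta)$ and $1/\theta$ applied to the factorization $\phi^{\alpha} = \phi^{(1-\theta)\alpha_0}\cdot\phi^{\theta\alpha_1}$ gives $J(\alpha) \leq J(\alpha_0)^{1-\theta}J(\alpha_1)^\theta$. This is the Lyapunov / three-lines input that replaces the interpolation-space formalism mentioned in the text, and it requires neither analyticity nor an outer assumption on $f$.

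I would then specialize to $\alpha_0 = [\alpha]$, $\alpha_1 = [\alpha]+1$, and $\theta = \{\alpha\}$, so that $\alpha = [\alpha]+\{\alpha\}$. Since $\alpha \geq 2$ forces $[\alpha]\geq 2$, both endpoints are integers that are at least $2$, and Corollary~\ref{cor:burbeaext} with $p=2$ supplies the contractive integer bounds $\|f\|_{A^{2m}_m} \leq \|f\|_{H^2}$, equivalently $J(m) \leq \frac{1}{m-1}\|f\|_{H^2}^{2m}$ for $m = [\alpha]$ and $m=[\alpha]+1$. Substituting these into the log-convexity estimate and verifying that the total power of $\|f\|_{H^2}$ collapses to $2\bigl([\alpha](1-\{\alpha\}) + ([\alpha]+1)\{\alpha\}\bigr) = 2\alpha$ yields $J(\alpha) \leq \bigl(([\alpha]-1)^{1-\{\alpha\}}([\alpha])^{\{\alpha\}}\bigr)^{-1}\|f\|_{H^2}^{2\alpha}$; multiplying by $\alpha-1$ and taking $2\alpha$-th roots gives the stated inequality, and one checks that at $\alpha=2$ the constant is $1$, consistent with the known contractive case.

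As for difficulty, there is essentially no deep obstacle once the reformulation is in hand: the entire content is the clean rewriting that turns the weighted Bergman integrand into a pure power $\phi^\alpha$ against the invariant measure $\mu$, after which convexity is immediate from H\"older and the integer endpoints come for free from Carleman--Burbea. The one point demanding care is the exclusion of $\alpha=1$, where $J(1)$ need not be finite and the prefactor $\alpha-1$ degenerates to produce a $0\cdot\infty$ indeterminacy; this is precisely why the hypothesis $\alpha\geq 2$ (equivalently $[\alpha]\geq 2$) is imposed, keeping both interpolation endpoints safely inside the contractive integer regime.
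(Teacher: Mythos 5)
Your proof is correct, but it takes a more elementary route than the paper. The paper treats the coefficient map $T\colon (a_n)\mapsto \sum a_n z^n$ as an operator from $\ell^2$ into the weighted spaces $L^{2\alpha}(\mathbb{D},dA_\alpha)$ and invokes complex interpolation of weighted $L^p$ spaces (citing \cite[Sec.~5.5]{BL76}) between the contractive integer endpoints $\alpha=k$ and $\alpha=k+1$; the authors even remark that the same could be done by hand with the three lines lemma. You instead notice that the left-hand side is just the $\alpha$-th moment $J(\alpha)=\int_{\mathbb{D}}\phi^\alpha\,d\mu$ of the fixed nonnegative function $\phi=|f|^2(1-|z|^2)$ against the fixed measure $\mu$, so that no operator interpolation is needed at all: Lyapunov's inequality (H\"older with exponents $1/(1-\theta)$ and $1/\theta$) gives $J(\alpha)\le J([\alpha])^{1-\{\alpha\}}J([\alpha]+1)^{\{\alpha\}}$ directly, and the integer endpoints $J(m)\le \|f\|_{H^2}^{2m}/(m-1)$ come from Corollary~\ref{cor:burbeaext}. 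Your bookkeeping is right: the exponents of $\|f\|_{H^2}$ sum to $2\alpha$, the denominator is exactly $([\alpha]-1)^{1-\{\alpha\}}[\alpha]^{\{\alpha\}}$, and reinstating the prefactor $\alpha-1$ and taking $2\alpha$-th roots gives the stated constant, which equals $1$ at integer $\alpha$. Your argument buys brevity, self-containedness, and freedom from the delicate bookkeeping of which exponent and weight the interpolated space actually carries; what it gives up is only the general operator-theoretic framework, which is not needed here precisely because the ``operator'' is the identity and only the target norm varies. Your remark about excluding $\alpha$ near $1$ (divergence of $J(1)$ against the $0\cdot\infty$ prefactor) correctly identifies why the hypothesis $\alpha\ge 2$ is imposed, matching the paper's observation that the case $\alpha=1$ must be excluded because the $H^2$ norm lives on $\mathbb{T}$ rather than $\mathbb{D}$.
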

\begin{proof}
	In view of \eqref{eq:reform}, we consider $T$ to be the operator defined by
	\[(a_n)_{n\geq0} \qquad\mapsto\qquad \sum_{n=0}^\infty a_n z^n,\]
	which maps $\ell^2$ to $L^p(\mathbb{D},dA_\alpha)$, where
	\[dA_\alpha(z) = (\alpha-1)(1-|z|^2)^{\alpha}\,d\mu(z).\]
	We will interpolate between the cases $\alpha=k$ and $\alpha=k+1$, for $k=2,3,\ldots$, where we know that $T$ is contractive. Let therefore $k<\alpha<k+1$ and set $\theta=\{\alpha\}$. Obviously $[\ell^2,\ell^2]_\theta = \ell^2$. We get from \cite[Sec.~5.5]{BL76} that 
	\[\left[L^{2k}(\mathbb{D},dA_k),\,L^{2(k+1)}(\mathbb{D},dA_{k+1})\right]_\theta = L^{2(k+\theta)}(\mathbb{D},d\widetilde{A}_\theta),\]
	with equal norms, where
	\[d\widetilde{A}_\theta(z) = \left(k-1\right)^{1-\theta}\left(k+1-1\right)^{\theta} (1-|z|^2)^{k+\theta} \,d\mu(z).\]
	The proof is completed by recalling that $k=[\alpha]$ and $\theta=\{\alpha\}$. 
\end{proof}
It is plain that 
\begin{equation}\label{eq:limalpha} 
	\lim_{\alpha\to \infty} \frac{\alpha-1}{\left([\alpha]-1\right)^{1-\{\alpha\}}\left([\alpha] \right)^{\{\alpha\}}} = 1. 
\end{equation}
We offer only the following explicit value, formulated with respect to Conjecture~\ref{conj:burbea}.
\begin{thm}\label{thm:smallconstant} 
	Let $0<p<1$ and set $C=\left(2/(e\log{2})\right)^{1/2}=1.030279\ldots$. Then
	\[\left(\sum_{n=0}^\infty \frac{|a_n|^2}{c_{2/p}(n)}\right)^\frac{1}{2} \leq C\|f\|_{H^p}.\]
\end{thm}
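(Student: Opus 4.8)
The plan is to obtain the estimate by pushing the interpolation constant of Lemma~\ref{lem:interpolation} through the correspondence between the two formulations and then optimizing over the parameter range. Writing $\alpha = 2/p$, the hypothesis $0<p<1$ becomes $\alpha>2$, and the left-hand side of the asserted inequality is exactly $\|f\|_{A^2_\alpha}$. The translation between \eqref{eq:reform} and \eqref{eq:burbea} set up in the introduction (reduce to outer functions, where the inner factor only increases the Bergman norm, and then substitute a suitable power) turns a bound $\|F\|_{A^{2\alpha}_\alpha}\le K\|F\|_{H^2}$ into the bound $\|f\|_{A^2_\alpha}\le K^\alpha\|f\|_{H^p}$, because $\|f\|_{A^2_\alpha}=\|F\|_{A^{2\alpha}_\alpha}^\alpha$ and $\|F\|_{H^2}^\alpha=\|f\|_{H^p}$ for the substitution $f=F^\alpha$ with $F$ outer. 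Feeding the constant $K=\Phi(\alpha)^{1/(2\alpha)}$ from Lemma~\ref{lem:interpolation} through this, where $\Phi(\alpha):=(\alpha-1)\big/\big(([\alpha]-1)^{1-\{\alpha\}}([\alpha])^{\{\alpha\}}\big)$, yields the constant $K^\alpha=\Phi(\alpha)^{1/2}$ in \eqref{eq:burbea}. Hence it suffices to show that $\sup_{\alpha>2}\Phi(\alpha)=2/(e\log 2)$, for then $C=(2/(e\log2))^{1/2}$.

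To evaluate this supremum I would fix an integer $k=[\alpha]\ge 2$ and set $\theta=\{\alpha\}\in[0,1)$, so that on the $k$-th interval the quantity to maximize is
\[
g_k(\theta) = \frac{k-1+\theta}{(k-1)^{1-\theta}\,k^{\theta}}.
\]
Since $g_k(0)=1$ and $g_k(\theta)\to 1$ as $\theta\to 1^-$, the maximum is attained at an interior critical point. Differentiating the logarithm gives $(\log g_k)'(\theta)=(k-1+\theta)^{-1}-\log\frac{k}{k-1}$, so the unique critical point is
\[
\theta^\ast = \frac{1}{\log\frac{k}{k-1}}-(k-1)=(k-1)\,\frac{1-u_k}{u_k},\qquad u_k:=(k-1)\log\tfrac{k}{k-1}.
\]
Here $\theta^\ast>0$ because $u_k<1$ (from $\log(1+x)<x$), and $\theta^\ast<1$ because it is equivalent to $\log\frac{k}{k-1}>\frac1k$, which holds since $-\log(1-\tfrac1k)=\sum_{j\ge1}(jk^j)^{-1}>\tfrac1k$; thus the critical point genuinely lies in $(0,1)$. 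Substituting $\theta^\ast$ and simplifying, the maximal value on the $k$-th interval is $e^{u_k-1}/u_k$.

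It then remains to identify the largest of these per-interval maxima. The two decisive facts are that $u_k=(k-1)\log(1+\tfrac1{k-1})$ increases with $k$ toward its limit $1$ (equivalently, $(1+\tfrac1{k-1})^{k-1}$ increases to $e$), and that $u\mapsto e^{u-1}/u$ is strictly decreasing on $(0,1)$, its logarithmic derivative being $1-1/u<0$ there. Consequently $e^{u_k-1}/u_k$ is largest for the smallest admissible $u_k$, namely at $k=2$, where $u_2=\log 2$ and the value equals $e^{\log 2-1}/\log 2=2/(e\log 2)$. This pins the supremum and gives $C=(2/(e\log2))^{1/2}$.

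There is no deep obstacle once Lemma~\ref{lem:interpolation} is available: the argument is elementary calculus. The only points requiring care are confirming that $\theta^\ast$ always lies in $(0,1)$, so that the per-interval maximum is the interior value rather than an endpoint, and establishing the monotonicity of $u_k$, which is precisely what localizes the global maximum to the first interval $k=2$ and thereby produces the clean constant $2/(e\log 2)$. The limit \eqref{eq:limalpha} is mirrored here by $u_k\to 1$, at which $e^{u_k-1}/u_k\to 1$.
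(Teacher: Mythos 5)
Your argument is correct and follows essentially the same route as the paper: translate Lemma~\ref{lem:interpolation} into the coefficient formulation via the outer-function substitution, maximize the interpolation constant on each interval $[k,k+1]$ by elementary calculus, and then use a monotonicity argument to show the per-interval maxima are largest at $k=2$, yielding $2/(e\log 2)$. The only (cosmetic) difference is the parametrization: you track $u_k=(k-1)\log\frac{k}{k-1}$ increasing to $1$ with $u\mapsto e^{u-1}/u$ decreasing, while the paper tracks $k\log\frac{k}{k-1}$ decreasing to $1$ with $x\mapsto x-\log x-1$ increasing; these agree since $e^{\log\frac{k}{k-1}}=\frac{k}{k-1}$.
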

\begin{proof}
	We set $\alpha=2/p$ and reformulate Lemma~\ref{lem:interpolation} as follows. If $0<p<1$ and $f(z) = \sum_{n\geq0} a_n z^n$ is in $H^p$, then
	\[\left(\sum_{n=0}^\infty \frac{|a_n|^2}{c_\alpha(n)}\right)^\frac{1}{2} \leq \left(\frac{\alpha-1}{\left([\alpha]-1\right)^{1-\{\alpha\}}\left([\alpha] \right)^{\{\alpha\}}}\right)^\frac{1}{2} \|f\|_{H^p}.\]
	Set $[\alpha]=k$ and note that
	\[ \frac{\alpha-1}{\left([\alpha]-1\right)^{1-\{\alpha\}} \left([\alpha]\right)^{\{\alpha\}}} = \frac{k^k}{(k-1)^{k+1}}\cdot (\alpha-1)\left(\frac{k}{k-1}\right)^{-\alpha}. \]
	The maximum of
	\[\alpha \mapsto (\alpha-1)\left(\frac{k}{k-1}\right)^{-\alpha}\]
	on the interval $[k,k+1]$ occurs at the point
	\[\alpha = 1 + 1/\log\left(\frac{k}{k-1}\right) =: 1 + 1/\beta(k).\]
	Hence we get that 
	\begin{align*}
		\frac{\alpha-1}{\left([\alpha]-1\right)^{1-\{\alpha\}} \left([\alpha]\right)^{\{ \alpha\}}} &\leq \frac{1}{\beta(k)(k-1)^{k-1/\beta(k)} k^{1+1/\beta(k)-k}} \\
		&=\exp\left(k\beta(k)-\log\left(k\beta(k)\right)-1\right). 
	\end{align*}
	We easily check that $x \mapsto x-\log(x)-1$ is increasing and that $k \mapsto k\beta(k)$ is decreasing, and conclude that for every $\alpha\geq2$ we have
	\[\frac{\alpha-1}{\left([\alpha]-1\right)^{1-\{\alpha\}} \left([\alpha]\right)^{\{\alpha\}}} \leq \exp\left(2\beta(2)-\log\left(2\beta(2)\right)-1\right) = \frac{2}{e\log{2}}. \qedhere\]
\end{proof}

The final observation of this section is that \eqref{eq:general} seems to be weaker as $\alpha$ increases. To make this statement explicit, let us now fix an outer function $f$  in $H^2$, and assume that $\|f\|_{H^2}=1$. We define 
\begin{equation}\label{eq:Uf} 
	U_f(\alpha) := \|f\|_{A^{2\alpha}_\alpha}^{2\alpha} = \int_{\mathbb{D}}|f(z)|^{2\alpha}\,(\alpha-1)(1-|z|^2)^\alpha \,d\mu(z).
\end{equation}
Replacing $\alpha$ by a complex number $\gamma = \alpha+i\beta$, we use the fact that $f$ is outer, and hence non-vanishing, to conclude that
\[U_f(\gamma) = \mre \int_{\mathbb{D}}|f(z)|^{2\gamma}\,(\gamma-1)(1-|z|^2)^\gamma \,d\mu(z)\]
is harmonic. Clearly $U_f(1)=1$, and it would be enough to prove that $U_f(\alpha)\leq1$ for $\alpha\geq1$ to conclude. We are therefore led to the the following. 
\begin{que}\label{que:decreasing} 
	Suppose that $f$ is an outer function with $\|f\|_{H^2}=1$. Is it true that
	\[\frac{
	\partial}{
	\partial \alpha} U_f(\alpha)\le 0\]
	for all $\alpha>1$? 
\end{que}
Let us collect some computations in support of a positive answer to this question. We see that 
\begin{align*}
	U'_f(\alpha) := \frac{
	\partial}{
	\partial \alpha} U_f(\alpha) = & \int_{\mathbb{D}} |f(z)|^{2\alpha}(1-|z|^2)^{\alpha} d\mu(z) \\
	& - (\alpha-1) \int_{\mathbb{D}} |f(z)|^{2\alpha}(1-|z|^2)^{\alpha} \log\frac{1}{|f(z)|^{2}(1-|z|^2)} d\mu(z). 
\end{align*}
Note that if $f$ is not a normalized reproducing kernel, then by the Cauchy--Schwarz inequality
\[|f(z)|^{2}(1-|z|^2) < \|f\|_{H^2}^2=1\]
for every point $z \in \mathbb{D}$. This means that $U_f$ is ``eventually'' decreasing because the logarithmic factor is bounded below. Note that by Lemma~\ref{lem:logconvex} and the fact that $f$ is non-vanishing we get that
\[U_f(\alpha+\beta) \leq U_f(\alpha)U_f(\beta)\]
for every $\alpha,\beta\geq1$. In particular, $U_f(\alpha+1)\leq U_f(\alpha)$, so that $ U'_f(\alpha)\le 0$ must occur for some $\alpha$ in every interval of length $1$.


\section{A weak type inequality} \label{sec:measure} 
Let again $f$ be a nontrivial function in $H^2$ with $\| f\|_{H^2}=1$. For $0<\lambda<1$, we wish to estimate $\mu(E_f(\lambda))$, where
\[ E_f(\lambda):=\left\{z\in \mathbb{D}: \, |f(z)|^2(1-|z|^2) > \lambda \right\}. \]
Our interest in $\mu(E_f(\lambda))$ is motivated by the observation that \eqref{eq:Uf} may rewritten in distributional form as 
\begin{equation}\label{eq:Uf2} 
	U_f(\alpha)=\alpha(\alpha-1) \int_0^{1} \lambda^{\alpha-1} \mu(E_f (\lambda) )\,d\lambda. 
\end{equation}
We will prove the following theorem. 
\begin{thm}\label{thm:meas} 
	There exists a universal constant $C$ such that 
	\begin{equation}\label{eq:bound} 
		\mu(E_f(\lambda)) \le C(1/\lambda-1) 
	\end{equation}
	for every function $f$ in $H^2$ with $\|f\|_{H^2}=1$ and $0<\lambda<1$. 
\end{thm}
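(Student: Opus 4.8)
My plan is to fix $\|f\|_{H^2}=1$ and first reduce to the case where $f$ is outer, since factoring out an inner factor only decreases $|f(z)|^2(1-|z|^2)$ pointwise and hence shrinks $E_f(\lambda)$. The basic object is the normalized reproducing kernel $\widetilde k_z=\sqrt{1-|z|^2}\,(1-\overline z\,\cdot\,)^{-1}$, for which $|f(z)|^2(1-|z|^2)=|\langle f,\widetilde k_z\rangle|^2$; thus $E_f(\lambda)=\{z:|\langle f,\widetilde k_z\rangle|^2>\lambda\}$, and the extremal functions are the kernels themselves: for $f=\widetilde k_w$ the set $E_f(\lambda)$ is the pseudohyperbolic disc $\{\varrho(z,w)<\sqrt{1-\lambda}\}$ (with $\varrho(z,w)=|(w-z)/(1-\overline w z)|$), whose invariant measure is exactly $1/\lambda-1$. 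This pins down the sharp profile and shows the inequality should be an equality for kernels. I also record that $\mu(E_f(\lambda))$ is invariant under $f\mapsto U_\psi f$, where $U_\psi g=(g\circ\psi)(\psi')^{1/2}$ is the isometry of $H^2$ induced by a disc automorphism $\psi$, since $|U_\psi g(z)|^2(1-|z|^2)=\bigl(|g|^2(1-|\cdot\,|^2)\bigr)\circ\psi$ and $\mu$ is automorphism invariant.

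I would then split the range of $\lambda$. For $\lambda$ bounded away from $0$ and $1$ a crude estimate suffices: by Carleman's inequality (Theorem~\ref{thm:carlemanburbea} with $\alpha=2$) together with \eqref{eq:Uf2} we have $U_f(2)=2\int_0^1\lambda\,\mu(E_f(\lambda))\,d\lambda\le1$, and since $\mu(E_f(\cdot\,))$ is decreasing this yields $\mu(E_f(\lambda))\le1/\lambda^2$, which is $\le C(1/\lambda-1)$ on any fixed compact subinterval of $(0,1)$. For $\lambda\to1^-$ I would argue by clustering: if $z_0\in E_f(\lambda)$ then $g=U_{\varphi_{z_0}}f$ is a unit vector with $|g(0)|^2>\lambda$, so $\|g-g(0)\|_{H^2}^2=1-|g(0)|^2<1-\lambda$, and applying $U_{\varphi_{z_0}}^{-1}$ shows $f$ lies within $\sqrt{1-\lambda}$ of a unimodular multiple of $\widetilde k_{z_0}$. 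Comparing the approximations coming from two points $z_0,z_1\in E_f(\lambda)$ and using $\|\widetilde k_{z_0}-\zeta\widetilde k_{z_1}\|^2\ge2-2\sqrt{1-\varrho(z_0,z_1)^2}\ge\varrho(z_0,z_1)^2$ forces $\varrho(z_0,z_1)\lesssim\sqrt{(1-\lambda)/\lambda}$. Hence for $\lambda$ near $1$ the set $E_f(\lambda)$ lies in a pseudohyperbolic ball of radius $R\lesssim\sqrt{1-\lambda}$, so $\mu(E_f(\lambda))\le R^2/(1-R^2)\lesssim1-\lambda\le1/\lambda-1$, with the correct vanishing.

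The remaining and genuinely hard regime is $\lambda\to0^+$, where the target $1/\lambda-1\asymp1/\lambda$ must be met. Here I would reformulate potential-theoretically: for outer $f$ the function $v:=-\log\bigl(|f|^2(1-|z|^2)\bigr)\ge0$ satisfies $\Delta v=4(1-|z|^2)^{-2}$ (the hyperbolic Poisson equation $\Delta_h v=1$), with $v=v_0-2\log|f|$, $v_0=-\log(1-|z|^2)$ and $-2\log|f|$ harmonic, and $E_f(\lambda)=\{v<\log(1/\lambda)\}$. On $\Omega_s=\{v<s\}$ the function $w=s-v$ is exactly the hyperbolic torsion function, and for a hyperbolic disc the relation $\sup w=2\log\cosh(R/2)$ corresponds to area $4\pi(e^s-1)$, reproducing the kernel extremal; the goal is to bound the area of $\Omega_s$ by that of this disc. \emph{This is the step I expect to be the main obstacle.} The torsion data alone do not suffice — a thin hyperbolic tube has small $\sup w$ but unbounded area — so one must use the rigid global structure: taking $z_0$ a minimum point of $v$, the mean value identity for $\Delta_h$ shows every hyperbolic sphere about $z_0$ has $v$-average exactly $v(z_0)+2\log\cosh(R/2)$, the same as for the radial extremal, and the crux is to convert this spherical rigidity into the sublevel-area comparison by a rearrangement argument that genuinely exploits harmonicity of $v-v_0$ and the normalization $\int_{\mathbb T}|f^*|^2=1$. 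It is worth emphasising why the soft alternatives fall short, since this motivates the rigidity approach: the Hardy--Littlewood moment bounds $U_f(\alpha)=(\alpha-1)\int\bigl(|f|^2(1-|z|^2)\bigr)^\alpha\,d\mu\le C_0$ combined with Chebyshev give only $\mu(E_f(\lambda))\lesssim\lambda^{-1}\log(1/\lambda)$; covering by invariant balls with the sub-mean value property gives $\mu(E_f(\lambda))\lesssim\lambda^{-2}$; and the nontangential maximal function controls only the boundary shadow of $E_f(\lambda)$, which for kernels $\widetilde k_w$ with $w\to\partial\mathbb D$ is far smaller than the invariant measure $\mu(E_f(\lambda))$. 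The sharp linear rate therefore seems to require the invariant, rigidity-based comparison, of which the desired universal $C$ would be a non-sharp instance.
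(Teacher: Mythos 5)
Your reductions and your treatment of two of the three ranges are sound: the Chebyshev bound $\mu(E_f(\lambda))\le\lambda^{-2}$ coming from $U_f(2)\le1$ handles any compact subinterval of $(0,1)$, and the clustering argument near $\lambda=1$ correctly produces the required vanishing $O(1-\lambda)$ (the paper obtains the same vanishing differently, from the uniform moment bounds $k(k-1)\int_0^1\lambda^{k-1}\mu(E_f(\lambda))\,d\lambda\le M$ together with the monotonicity of $\mu(E_f(\cdot))$). But the proof is not complete: the regime $\lambda\to0^+$, where the assertion is precisely the weak-type bound $\mu(E_f(\lambda))\le C/\lambda$, is the heart of the theorem, and you explicitly leave it open, offering only a programme (hyperbolic torsion functions, spherical rigidity, rearrangement) that you do not carry out. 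As it stands this is a genuine gap, not a missing detail.

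What you are missing is that the maximal-function route you dismiss does work once it is combined with the M\"obius normalization that you yourself introduce. Normalize so that $|f(z)|^2(1-|z|^2)$ attains its maximum at $z=0$. Then for each $\theta$ the radial slice $\{r:|f(re^{i\theta})|^2(1-r^2)>\lambda\}$ is contained in $[0,r^*(\theta))$, where $r^*(\theta)$ is the largest root of $|f(re^{i\theta})|^2(1-r^2)=\lambda$, and
\[
\int_0^{r^*(\theta)}\frac{2r\,dr}{(1-r^2)^2}=\frac{1}{1-r^*(\theta)^2}-1=\frac{|f(r^*(\theta)e^{i\theta})|^2}{\lambda}-1\le\frac{f^*(\theta)^2}{\lambda}-1,
\]
with $f^*$ the radial maximal function. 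Integrating in $\theta$ and invoking the Hardy--Littlewood maximal theorem $\int_0^{2\pi}f^*(\theta)^2\,\frac{d\theta}{2\pi}\le C\|f\|_{H^2}^2$ yields $\mu(E_f(\lambda))\le C/\lambda-1$ for every $0<\lambda<|f(0)|^2$ (and $E_f(\lambda)=\emptyset$ otherwise). Your objection --- that the maximal function only controls the boundary shadow of $E_f(\lambda)$, which for a kernel $\widetilde k_w$ with $w$ near $\partial\mathbb{D}$ is far smaller than $\mu(E_f(\lambda))$ --- is exactly what the normalization defuses: after moving the maximum to the origin the kernel becomes a constant and $E_f(\lambda)$ a centred disc, and in general the radial integration identity converts the invariant measure into the boundary quantity $f^*(\theta)^2/\lambda$. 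Combining this global bound with your (or the paper's) $O(1-\lambda)$ estimate near $\lambda=1$ finishes the proof; no rearrangement or rigidity input is needed for a non-sharp universal constant.
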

Plugging \eqref{eq:bound} into \eqref{eq:Uf2}, we get
\[ U_f(\alpha)\le C. \]
Thus if we could prove the following conjecture, then we would also have proven Conjecture~\ref{conj:burbea}.
\begin{conj}\label{conj:measure} 
	Suppose that $f \in H^2$. Then
	\[\mu\left(\left\{z\in \mathbb{D}\,: \, |f(z)|^2(1-|z|^2) > \lambda \|f\|_{H^2}^2\right\}\right) \leq \frac{1}{\lambda}-1.\]
\end{conj}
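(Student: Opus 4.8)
The plan is to first make two standard reductions, then run a covering argument, and finally isolate the sharp control of the resulting point count as the crux.

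First I would reduce to the case that $f$ is outer. Writing $f=\mathcal{I}\mathcal{O}$ with $\mathcal{I}$ inner and $\mathcal{O}$ outer, we have $|f(z)|\le|\mathcal{O}(z)|$ while $\|\mathcal{O}\|_{H^2}=\|f\|_{H^2}=1$; hence $E_f(\lambda)\subseteq E_{\mathcal{O}}(\lambda)$ and it suffices to treat $\mathcal{O}$. Next I would record the conformal structure of the density $g(z):=|f(z)|^2(1-|z|^2)$. For a disc automorphism $\varphi$ the weighted composition $U_\varphi f:=(f\circ\varphi)\,(\varphi')^{1/2}$ is an isometry of $H^2$, and the Schwarz--Pick identity $|\varphi'(z)|(1-|z|^2)=1-|\varphi(z)|^2$ gives $g_{U_\varphi f}=g_f\circ\varphi$. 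Since $\mu$ is M\"obius invariant, $\mu(E_{U_\varphi f}(\lambda))=\mu(E_f(\lambda))$, so we may freely move any chosen point of $\mathbb{D}$ to the origin. This also pins down the extremal profile: for a normalized reproducing kernel one computes $g(z)=1-|\psi(z)|^2$ for an automorphism $\psi$, whence $\mu(E_f(\lambda))=1/\lambda-1$ exactly, which both calibrates the target constant and warns that the estimate must vanish like $1-\lambda$ as $\lambda\to1^-$.

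The heart of the argument is a Vitali covering. For $z_0\in E_f(\lambda)$, transport to the origin by an automorphism and use the sub-mean-value property of $|h|^{2\alpha}$ (valid for every $\alpha>0$ when $h$ is analytic) on a fixed Euclidean disc; transporting back, one obtains a local lower bound $\int_{D_h(z_0,R)}g^\alpha\,d\mu\ge c_\alpha\, g(z_0)^\alpha\ge c_\alpha\lambda^\alpha$ on a hyperbolic disc $D_h(z_0,R)$ of fixed radius $R$. I would then pick a maximal $2R$-separated family $\{z_j\}\subseteq E_f(\lambda)$, so that the balls $D_h(z_j,R)$ are pairwise disjoint while $\{D_h(z_j,2R)\}$ covers $E_f(\lambda)$. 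Disjointness together with the global bound $\int_{\mathbb{D}}g^\alpha\,d\mu=U_f(\alpha)/(\alpha-1)\le C_\alpha/(\alpha-1)$ (Theorem~\ref{thm:smallconstant} for $\alpha\ge 2$) yields $N\,c_\alpha\lambda^\alpha\le C_\alpha/(\alpha-1)$ for the number $N$ of balls, and since each $D_h(z_j,2R)$ has the same finite $\mu$-measure $V_{2R}$ we conclude $\mu(E_f(\lambda))\le N V_{2R}\lesssim_\alpha \lambda^{-\alpha}$.

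The main obstacle is the gap between the power $\lambda^{-\alpha}$ produced above (for any fixed $\alpha>1$) and the required $\lambda^{-1}$. Letting $\alpha\to1^+$ is the natural fix, but the constant $C_\alpha/(c_\alpha(\alpha-1))$ blows up, and indeed every soft argument I know of --- Chebyshev applied to the sharp moments $\int g^\alpha\,d\mu\sim(\alpha-1)^{-1}$, the Hardy--Littlewood maximal bound $|f(z)|^2\lesssim M(|f^*|^2)$, or integrating $g$ over the level set --- loses exactly one logarithm, giving $\lambda^{-1}\log(1/\lambda)$. The loss is structural: the harmonic-majorant relaxation $g\le(1-|z|^2)P[|f^*|^2]$ is genuinely useless as a route to the theorem, since for $|f^*|^2\in L^1\setminus L\log L$ (which does occur for outer $f\in H^2$) the corresponding level set has infinite $\mu$-measure. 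Thus analyticity must be used beyond subharmonicity of $|f|^2$.

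To remove the logarithm I would exploit that, for outer $f$, the function $v:=\tfrac12\log g=\log|f|+\tfrac12\log(1-|z|^2)$ is superharmonic with Riesz measure \emph{exactly} $\mu$: indeed $-\Delta v=2(1-|z|^2)^{-2}$, so $-\Delta v\,dA/\pi=2\,d\mu$. The divergence theorem then gives the exact identity
\[ \mu\big(E_f(\lambda)\big)=\frac{1}{2\pi}\int_{\{g=\lambda\}}\big|\nabla\log\sqrt{g}\big|\,d\sigma, \]
which reproduces $1/\lambda-1$ for the reproducing kernel. Here all the $f$-dependence sits in the harmonic part $\log|f|$, whose gradient is $|f'/f|$, so the problem becomes a length-weighted control of $f'/f$ along the level curve $\{g=\lambda\}$ against the fixed radial field coming from $\tfrac12\log(1-|z|^2)$. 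I expect the decisive step to be exactly this estimate --- equivalently, showing that the super-level points of $g$ form a Carleson configuration of density $\lesssim 1/\lambda$ uniformly --- and that the regimes $\lambda\to0^+$ (the bulk count) and $\lambda\to1^-$ (the local behavior near the maximum of $g$) will have to be handled separately.
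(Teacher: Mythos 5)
The statement you are addressing is Conjecture~\ref{conj:measure}, which the paper itself leaves open: it proves only the weaker Theorem~\ref{thm:meas}, i.e.\ $\mu(E_f(\lambda))\le C(1/\lambda-1)$ with an unspecified universal constant, and then reduces the sharp constant $1$ to an unverified sign condition. Your proposal, by your own admission, does not close this gap either. The reductions are sound and match the paper's setup (dropping the inner factor, conformal invariance of $|f(z)|^2(1-|z|^2)$ under the weighted composition isometry, calibration against the normalized reproducing kernel, for which equality holds). But the quantitative output of your covering argument is only $\mu(E_f(\lambda))\lesssim_\alpha\lambda^{-\alpha}$, degenerating to $\lambda^{-1}\log(1/\lambda)$ as $\alpha\to1^+$; this is strictly weaker than Theorem~\ref{thm:meas}, which the paper obtains with no logarithmic loss by integrating $d\mu$ radially out to the first level crossing $r^*(\theta)$ (giving \eqref{eq:radial}) and then invoking the $L^2$ bound for the radial maximal function, with a separate moment argument handling $\lambda\to1^-$.

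Your final paragraph rediscovers, in logarithmic form, exactly the paper's own proposed line of attack. The paper applies Green's second identity (twice) together with the Littlewood--Paley formula to arrive at the exact identity
\[
\mu(E_f(\lambda)) = \frac 1\lambda-1- \frac {1}{\lambda}\int_{\mathbb{D}\setminus E_f(\lambda)}|f'(z)|^2\log \frac1{|z|^2}\, \frac{dxdy}{\pi} + \frac 1{\lambda}\int_{\gamma_\lambda} \log |z|^2\,\frac{\partial}{\partial n}|f(z)|^2 \, \frac{ds}{4\pi},
\]
and notes that the conjecture follows if the final boundary integral is non-positive. Your identity $\mu(E_f(\lambda))=\frac{1}{2\pi}\int_{\{g=\lambda\}}|\nabla\log\sqrt{g}|\,d\sigma$ is the same reduction (and needs a word about regularity of the level curve, available for a.e.\ $\lambda$ by Sard's theorem), and the ``length-weighted control of $f'/f$ along the level curve'' you flag as the decisive step is precisely the estimate that neither you nor the paper supplies. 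So what you have is a correct and well-motivated reduction of the conjecture to an unproved boundary estimate, plus an independent rederivation of a weaker form of Theorem~\ref{thm:meas} --- not a proof of the statement.
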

We will make some additional comments on this conjecture below and proceed now to prove Theorem~\ref{thm:meas}. 
\begin{proof}
	[Proof of Theorem~\ref{thm:meas}] By M\"{o}bius invariance, we may, without loss of generality, assume that
	\[z\mapsto |f(z)|^2(1-|z|^2)\]
	attains its maximum at $0$. Then for every $0\le \theta<2\pi$ and $\lambda$ less than $|f(0)|$,
	\[ r^*(\theta):=\max \left\{ r\,:\, |f(re^{i\theta})|^2(1-r^2)=\lambda \right\} \]
	exists. It follows that
	\[ \mu(E_f(\lambda))\le \int_0^{2\pi} \int_{0}^{r^*(\theta)} \frac{2rdr}{(1-r^2)^2}\,\frac{d\theta}{2\pi} \]
	and hence 
	\begin{equation}\label{eq:radial} 
		\mu(E_f(\lambda))\le \frac{1}{\lambda} \int_{0}^{2\pi} \max\left\{|f(re^{i\theta})|^2 \,:\,|f(re^{i\theta})|^2(1-r^2)=\lambda\right \}\, \frac{d\theta}{2\pi} - 1.
	\end{equation}
	We introduce the radial maximal function $f^*(\theta):=\sup_{0<r<1} |f(re^{i\theta})|$ and see that the latter inequality implies that 
	\begin{equation}\label{eq:weak} 
		\mu(E_f(\lambda))\le \frac{1}{\lambda} \int_0^{2\pi} f^*(\theta)^2 \,\frac{d\theta}{2\pi} -1 \le \frac{C}{\lambda}\|f\|_{H^2}^2-1=\frac{C}{\lambda}-1,
	\end{equation}
	where we in the second step used a classical theorem of Hardy and Littlewood.
	
	We can improve \eqref{eq:weak} by using \eqref{eq:general}, or in other words the fact that there is an absolute constant $M$ (independent of $f$) such that
	\[k(k-1)\int_0^1 \lambda^{k-1} \mu(E_f(\lambda)) \,d\lambda \le M \]
	whenever, say, $k\ge 2$. (In view of \eqref{eq:limalpha}, $M$ can be chosen arbitrarily close to $1$ if we choose a sufficiently large lower bound for $k$.) Taking into account the monotonicity of $\mu(E_f(\lambda))$, we see that
	\[\mu(E_f(1-\varepsilon)) \le \frac{M(1-\varepsilon)^{-k}}{k-1} = M (1-\varepsilon)^{-1} \frac{(1-\varepsilon)^{-(k-1)}}{k-1} . \]
	The maximum of $x\mapsto c^x/x$ occurs when $x=1/|\log(1-\varepsilon)|$ so that
	\[\mu(E_f(1-\varepsilon))\le Me (1-\varepsilon)^{-1} |\log(1-\varepsilon)|=Me(\varepsilon+O(\varepsilon^2))\]
	when $\varepsilon \to 0^+$. The estimate in \eqref{eq:bound} can therefore be improved to
	\[\mu(E_f(\lambda)) \le C'\left(\frac{1}{\lambda}-1\right)\]
	for a universal constant $C'$. 
\end{proof}
Clearly, the transition from \eqref{eq:radial} to \eqref{eq:weak} is not optimal. In fact, we could hope for an affirmative answer to the following question, which would establish Conjecture~\ref{conj:measure}.
\begin{que} \label{que:increase}
	Is it true that
	\begin{equation}\label{eq:increase} 
		\int_{0}^{2\pi} \max\left\{|f(re^{i\theta})|^2 \,:\,|f(re^{i\theta})|^2(1-r^2)=\lambda\right\} \,\frac{d\theta}{2\pi} \le \| f\|_{H^2}^2 
	\end{equation}
	holds whenever $0<\lambda<|f(0)|$?
\end{que} 
It is clear that \eqref{eq:increase} holds for $\lambda$ close to 1; this is obvious when $f$ is a reproducing kernel and otherwise we have $|f(0)|<\| f\|_{H^2}$. Since we may assume that $f$ is an outer function, it is also clear that the integral in \eqref{eq:radial} tends to $\| f\|_{H^2}^2$ when $\lambda \to 0$, in view of Lebesgue's dominated convergence theorem and the domination by $f^*$. Hence if it is true that the integral increases with $1/\lambda$, then \eqref{eq:increase} would follow. 

Alternatively, we may approach Conjecture~\ref{conj:measure} by starting from the formula
\[ \mu(E_f(\lambda))=\int_{E_f(\lambda)} d\mu(z) = \int_{E_f(\lambda)} \Delta \log \left(\frac 1{1-|z|^2}\right)\frac{dxdy}{4\pi}. \]
Using the second Green identity, we obtain 
\begin{align*}
	\int_{E_f(\lambda)} \Delta \log \left(\frac 1{1-|z|^2}\right) \frac{dxdy}{4\pi} &=\int_{
	\partial E_f(\lambda)} \frac {
	\partial}{
	\partial n} \log \left(\frac 1{1-|z|^2}\right) \frac{ds}{4\pi} \\&= \int_{
	\partial E_f(\lambda)} \frac 1{1-|z|^2} \frac {
	\partial}{
	\partial n} |z|^2 \frac{ds}{4\pi},
\end{align*}
where $\frac{
\partial}{
\partial n}$ is differentiation in the outward normal direction and $ds$ is the arclength measure over the curve $\gamma_\lambda= 
\partial E_f(\lambda)$. On the curve $\gamma_\lambda$, we have that
\[\frac 1{1-|z|^2} = \frac {|f(z)|^2}{\lambda}\]
where we retain the normalization $\|f\|_{H^2}^2=1$. Thus 
\begin{align*}
	\int_{\gamma_\lambda} \frac 1{1-|z|^2} \frac {
	\partial}{
	\partial n} |z|^2\,\frac{ds}{4\pi} &= \int_{\gamma_\lambda} \frac {|z|^2}{1-|z|^2} \frac{
	\partial}{
	\partial n} \log |z|^2\, \frac{ds}{4\pi} \\
	&= \int_{\gamma_\lambda} \left(\frac{|f(z)|^2}{\lambda}-1\right) \frac{
	\partial}{
	\partial n} \log |z|^2\, \frac{ds}{4\pi}. 
\end{align*}
Finally, using again the second Green identity, we get 
\begin{align*}
	\int_{\gamma_\lambda} \left(\frac{|f(z)|^2}{\lambda}-1\right) \frac{ 
	\partial}{ 
	\partial n} \log |z|^2\, \frac{ds}{4\pi}= & 
	-\frac1{\lambda}\int_{\gamma_\lambda} \log |z|^2\frac{ 
	\partial}{ 
	\partial n}|f(z)|^2 \, \frac{ds}{4\pi} \\ & + \int_{E_f(\lambda)} \frac 1{\lambda}\log |z|^2\Delta |f(z)|^2\frac{dxdy}{4\pi} +\frac{|f(0)|^2}{\lambda} -1. 
\end{align*}
If we use the Littlewood--Paley formula for the $H^2$ norm, namely
\[ \frac 1{2\pi} \int_0^{2\pi} |f(e^{i\theta})|^2 \, d\theta = |f(0)|^2 + \frac 1{\pi}\int_{\mathbb D} |f'(z)|^2\log \frac 1{|z|^2}\, dxdy, \]
then we finally find that
\[\mu(E_f(\lambda)) = \frac 1\lambda-1- \frac {1}{\lambda}\int_{\mathbb{D}\setminus E_f(\lambda)}|f'(z)|^2\log \frac1{|z|^2}\, \frac{dxdy}{\pi} + \frac 1{\lambda}\int_{\gamma_\lambda} \log |z|^2\frac{
\partial}{ 
\partial n}|f(z)|^2 \, \frac{ds}{4\pi}. \]
If the last integral is negative, then we have proved the desired inequality and Conjecture~\ref{conj:measure} is verified. 


\section{Contractive symmetry and Riesz projection} \label{sec:dual} 
We will now investigate a symmetric companion to \eqref{eq:burbea} in the range $2\leq q < \infty$. To be more precise, we are interested in contractive inequalities of the type 
\begin{equation}\label{eq:genupper} 
	\|f\|_{H^q} \leq \left(\sum_{n=0}^\infty |a_n|^2 c_\beta(n)\right)^\frac{1}{2} =: \|f\|_{D_\beta}. 
\end{equation}
By choosing $f(z) = 1+\varepsilon z$ for some small $\varepsilon>0$, it is easy to verify that $\beta \geq q/2$ is necessary for \eqref{eq:genupper} to be contractive.
\begin{conj}\label{conj:dual} 
	The inequality 
	\begin{equation}\label{eq:dual} 
		\|f\|_{H^q} \leq \left(\sum_{n=0}^\infty |a_n|^2 c_{q/2}(n)\right)^\frac{1}{2} 
	\end{equation}
	holds for every $2\leq q < \infty$. 
\end{conj}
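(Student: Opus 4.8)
The plan is to prove Conjecture~\ref{conj:dual} by duality, reducing it to Conjecture~\ref{conj:burbea} together with a sharp contractivity property of the Riesz (analytic) projection. The structural observation I would start from is that $D_{q/2}$ is precisely the dual of the Bergman space $A^2_{q/2}$ under the unweighted coefficient pairing: for $f(z)=\sum_{n\ge0}a_nz^n$ and an analytic $h(z)=\sum_{n\ge0}h_nz^n$, writing $a_n\bar h_n=\bigl(a_n\sqrt{c_{q/2}(n)}\bigr)\bigl(\bar h_n/\sqrt{c_{q/2}(n)}\bigr)$ and applying the Cauchy--Schwarz inequality gives
\[
\left|\sum_{n\ge0}a_n\bar h_n\right|\le\|f\|_{D_{q/2}}\,\|h\|_{A^2_{q/2}}.
\]
This is the only place the weight $c_{q/2}(n)$ is used, and it costs nothing.

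First I would pass to the $L^q$--$L^{q'}$ duality, which is isometric since $q\ge2>1$. For $f\in H^q$ with boundary values in $L^q$,
\[
\|f\|_{H^q}=\sup_{\|g\|_{L^{q'}}\le1}\left|\frac1{2\pi}\int_0^{2\pi}f(e^{i\theta})\overline{g(e^{i\theta})}\,d\theta\right|.
\]
Because $f$ is analytic, only the nonnegative Fourier coefficients of $g$ contribute, so the integral equals $\sum_{n\ge0}a_n\overline{\widehat g(n)}$, i.e.\ the coefficient pairing of $f$ with the analytic projection $h=P_+g$. By density it suffices to let $g$ range over trigonometric polynomials, so that $h$ is a polynomial and all norms below are finite. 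Combining this with the Cauchy--Schwarz bound above, I obtain
\[
\|f\|_{H^q}\le\|f\|_{D_{q/2}}\sup_{\|g\|_{L^{q'}}\le1}\|P_+g\|_{A^2_{q/2}},
\]
so the whole matter reduces to the single estimate $\|P_+g\|_{A^2_{q/2}}\le\|g\|_{L^{q'}}$.

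The next step splits this estimate into an analytic part and a projection part. Setting $p_0=4/q$, which lies in $(0,2]$ precisely because $q\ge2$, the identity $2/p_0=q/2$ lets me invoke Conjecture~\ref{conj:burbea} (equivalently \eqref{eq:reform}) for the analytic function $h=P_+g$:
\[
\|P_+g\|_{A^2_{q/2}}=\|P_+g\|_{A^2_{2/p_0}}\le\|P_+g\|_{H^{4/q}}.
\]
Hence it remains only to establish $\|P_+g\|_{H^{4/q}}\le\|g\|_{L^{q'}}$; and since $4/q\le q'$ (this is exactly $(q-2)^2\ge0$), this is the assertion that the Riesz projection is contractive from $L^{q'}$ into $H^{4/q}$, which is the content of Conjecture~\ref{conj:riesz}. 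This completes the reduction and reproduces the implication announced in the introduction.

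The main obstacle is this final step. Both the Bergman embedding and the $L^q$ duality are genuinely contractive and lose nothing, so all of the difficulty is concentrated in the contractivity of $P_+\colon L^{q'}\to H^{4/q}$ with the sharp constant $1$. This is delicate: the Riesz projection already fails to be contractive on $L^s$ for $s\ne2$, where its norm is $1/\sin(\pi/s)$, so contractivity can be hoped for only when the target exponent $4/q$ is sufficiently smaller than the source exponent $q'$. The available gap here is governed by $(q-2)^2$ and is in this sense critical, vanishing exactly at $q=2$, where the whole inequality degenerates to Parseval's identity. I would therefore expect the unconditional instances one can actually secure to be precisely those where this Riesz projection bound is accessible (such as $q=2$, or values at which known contractive projection results apply), with the general statement remaining conditional on Conjecture~\ref{conj:riesz}.
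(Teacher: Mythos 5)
First, be aware that the statement you are addressing is a conjecture: the paper has no unconditional proof of \eqref{eq:dual} beyond $q=2$ and even integers $q$, and neither do you --- your argument, like the paper's Theorem~\ref{thm:connect}, is a reduction to Conjectures~\ref{conj:burbea} and~\ref{conj:riesz}, which you openly acknowledge. Within that framework your reduction is correct, and it is a genuinely different (dual) organization of the paper's argument. Theorem~\ref{thm:connect} first invokes Lemma~\ref{lem:dualext} to reduce to $2<q<4$, then writes $\|f\|_{H^q}\le\inf_{Pg=f}\|g\|_{L^{4/(4-q)}}=\|f\|_{(H^{4/q})^\ast}$ using the instance $r=4/(4-q)\in(2,\infty)$ of Conjecture~\ref{conj:riesz}, and finishes with the Banach-space dual of Conjecture~\ref{conj:burbea}; the restriction $q<4$ is forced there because $(H^{4/q})^\ast$ is only available as a Banach dual when $4/q\ge1$. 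You instead test $f$ against the unit ball of $L^{q'}$, project the test function, and apply Conjecture~\ref{conj:burbea} directly to $P_+g$ in the (possibly quasi-Banach) space $H^{4/q}$; this covers all $2\le q<\infty$ in one pass and dispenses with Lemma~\ref{lem:dualext} entirely. The trade-off is in which instances of Conjecture~\ref{conj:riesz} you assume: you need $P\colon L^{q'}\to H^{4/q}$ with $r=q'\in(1,2)$. For $2<q<4$ this is precisely the dual instance of the one the paper uses, and Lemma~\ref{lem:dualexp} shows the two assumptions are equivalent; but for $q>4$ your instance has target exponent $4/q<1$, where the duality argument of Lemma~\ref{lem:dualexp} is unavailable, so your hypothesis is not known to follow from the paper's hypothesis that Conjecture~\ref{conj:riesz} holds for $2<r<\infty$. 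Finally, note that your route as written does not by itself recover the one family of unconditional cases the paper does establish, namely \eqref{eq:dual} for all even integers $q$, which comes from Lemma~\ref{lem:dualext} applied to the trivial case $q=2$.
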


Clearly, \eqref{eq:dual} reduces to an identity when $q=2$, so we get that \eqref{eq:dual} holds for all even integers $q$ from the following result, which is an analogue of Corollary~\ref{cor:burbeaext}.
\begin{lem}\label{lem:dualext} 
	If \eqref{eq:genupper} holds for some pair of parameters $2 \leq q < \infty$ and $1\leq \beta < \infty$, then it also holds for the pairs $kq$ and $k\beta$ for every positive integer $k$. 
\end{lem}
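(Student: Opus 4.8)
The plan is to mirror the derivation of Corollary~\ref{cor:burbeaext} from Lemma~\ref{lem:logconvex}, replacing the log-convexity of the Bergman norms by a \emph{dual} log-convexity for the spaces $D_\beta$. By density it suffices to treat analytic polynomials $f$. Since passing to boundary values gives $\|f^k\|_{H^q}^q = \int_0^{2\pi}|f(e^{i\theta})|^{kq}\,\frac{d\theta}{2\pi}$, we have $\|f^k\|_{H^q} = \|f\|_{H^{kq}}^k$, and applying the hypothesis \eqref{eq:genupper} to the function $f^k$ therefore yields
\[\|f\|_{H^{kq}}^k = \|f^k\|_{H^q} \le \|f^k\|_{D_\beta}.\]
Consequently the lemma follows once we establish the estimate $\|f^k\|_{D_\beta} \le \|f\|_{D_{k\beta}}^k$, which is the exact analogue, under the $H^2$-pairing dual to $A^2_\beta$, of the inequality $\|f^k\|_{A^2_{k\beta}} \le \|f\|_{A^2_\beta}^k$ furnished by Lemma~\ref{lem:logconvex}. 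Note that the roles of the parameters are flipped: here the index \emph{increases} on the right, reflecting the duality.

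To prove this estimate I would argue at the level of Taylor coefficients, exactly as in the proof of Lemma~\ref{lem:logconvex}. Writing $f(z)=\sum_n a_n z^n$, the coefficients of $f^k$ are $b_n = \sum_{n_1+\cdots+n_k=n} a_{n_1}\cdots a_{n_k}$, and the Cauchy--Schwarz inequality with the splitting weights $\prod_j c_{k\beta}(n_j)$ gives
\[|b_n|^2 \le \left(\sum_{n_1+\cdots+n_k=n}\prod_{j=1}^k \frac{1}{c_{k\beta}(n_j)}\right)\left(\sum_{n_1+\cdots+n_k=n}\prod_{j=1}^k c_{k\beta}(n_j)\,|a_{n_j}|^2\right).\]
Summing $c_\beta(n)\,|b_n|^2$ over $n$ and carrying out the resulting unconstrained summation over $(n_1,\dots,n_k)$, which factors as a $k$-fold product equal to $\|f\|_{D_{k\beta}}^{2k}$, reduces everything to the scalar inequality
\begin{equation}\label{eq:dualweight}
	c_\beta(n)\sum_{n_1+\cdots+n_k=n}\prod_{j=1}^k\frac{1}{c_{k\beta}(n_j)} \le 1, \qquad n\ge 0.
\end{equation}
Granting \eqref{eq:dualweight} one obtains $\|f^k\|_{D_\beta}^2 \le \|f\|_{D_{k\beta}}^{2k}$, as desired, and the contractive constant $1$ is preserved throughout.

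The main obstacle is precisely the pointwise inequality \eqref{eq:dualweight}; the coefficient reductions above are routine, but \eqref{eq:dualweight} carries genuine content and has no one-line proof from the convolution identity \eqref{eq:sumprod} alone, since on the diagonal that identity only produces equalities. To attack it I would pass to the integral representation
\[\frac{1}{c_\gamma(n)} = (\gamma-1)\int_0^1 s^n (1-s)^{\gamma-2}\,ds,\]
valid for $\gamma>1$, which exhibits $1/c_\gamma(n)$ as the $n$-th moment of a Beta variable supported on $[0,1]$. Under this reading the left-hand side of \eqref{eq:dualweight} becomes $c_\beta(n)$ times the $n$-th moment of a random convex combination of $k$ independent such variables, while $1/c_\beta(n)$ is the corresponding moment for a single variable, so that \eqref{eq:dualweight} amounts to a moment-domination (equivalently, stochastic-domination) comparison between two explicit $[0,1]$-valued random variables; establishing this comparison is where I expect the real work to lie. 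The cases $n=0$ and $n=1$ give equality, a useful consistency check that also pins down the extremal profiles; the boundary case $\beta=1$ (where $c_1\equiv 1$ and the Beta variable degenerates to the point mass at $1$) is handled as a limit of the same argument.
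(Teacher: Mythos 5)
Your reduction is sound as far as it goes: the identity $\|f\|_{H^{kq}}^k = \|f^k\|_{H^q}$, the application of \eqref{eq:genupper} to $f^k$, and the Cauchy--Schwarz step with the splitting weights $c_{k\beta}(n_j)$ are all correct, and the skeleton coincides with the paper's proof. But the proposal is not a proof: everything is funnelled into the scalar inequality
\[
c_\beta(n)\sum_{n_1+\cdots+n_k=n}\,\prod_{j=1}^k\frac{1}{c_{k\beta}(n_j)}\,\le\,1,\qquad n\ge0,
\]
which you explicitly leave unproven; the Beta-integral/moment-domination strategy you sketch for it is never carried out. Since this inequality is where all the content of the lemma now sits, that is a genuine gap.

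The gap comes from choosing the Cauchy--Schwarz weights against, rather than with, the convolution identity. The paper splits with the weights $c_\beta(n_j)$ --- those of the target norm $D_\beta$ --- so that by \eqref{eq:sumprod} the first Cauchy--Schwarz factor is \emph{exactly} $c_{k\beta}(n)$, and what remains is only the submultiplicativity $c_\gamma(n_1+\cdots+n_k)\le c_\gamma(n_1)\cdots c_\gamma(n_k)$ for $\gamma\ge1$, applied once with $\gamma=\beta$ and once with $\gamma=k\beta$; the paper proves this in two lines from the product formula for binomial coefficients. For the record, your unproved inequality is true and can also be settled elementarily, with no stochastic input: by that same submultiplicativity each term satisfies $\prod_{j}1/c_{k\beta}(n_j)\le 1/c_{k\beta}(n)$; the number of terms is $\binom{n+k-1}{n}=c_k(n)$; and $c_\beta(n)\,c_k(n)\le c_{k\beta}(n)$ holds factor by factor in the product formula, because $j(j+k\beta-1)-(j+\beta-1)(j+k-1)=(j-1)(\beta-1)(k-1)\ge0$ for $j\ge1$. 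So your route can be completed, but the step you defer as ``where the real work lies'' is precisely the work the lemma requires, and as written the proposal does not contain it.
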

\begin{proof}
	We will rely on two preliminary results. The first is \eqref{eq:sumprod} and the second is
	\[c_\beta(n_1+n_2+\cdots+n_k) \leq c_\beta(n_1)c_\beta(n_2)\cdots c_\beta(n_k),\]
	which can be deduced inductively from the case $k=2$. Set $n=n_1+n_2$, so 
	\begin{align*}
		\binom{n + \beta-1}{n} = \prod_{j=1}^n \frac{j+\beta-1}{j} &= \binom{n_1+\beta-1}{n_1} \prod_{j=1}^{n_2} \frac{n_1+j+\beta-1}{n_1+j} \\
		&\leq \binom{n_1+\beta-1}{n_1}\binom{n_2+\beta-1}{n_2}, 
	\end{align*}
	where we used that $\beta-1\geq0$. Let $f(z) = \sum_{n\geq0} a_n z^n$. Then by the assumption that $\|g\|_{H^q}\leq \|g\|_{D_\beta}$, the Cauchy--Schwarz inequality and the preliminary results, we get that 
	\begin{align*}
		\|f\|_{H^{kq}}^k &= \|f^k\|_{H^q} \leq \|f^k\|_{D_\beta}=\left(\sum_{n=0}^\infty c_\beta(n)\left|\sum_{n_1+\cdots n_k = n} a_{n_1} \cdots a_{n_k}\right|^2\right)^\frac{1}{2} \\
		&\leq \left(\sum_{n=0}^\infty c_\beta(n)\left[\,\sum_{n_1+\cdots+n_k=n} c_\beta(n_1)\cdots c_\beta(n_k)\right]\right. \\
		&\qquad\qquad\qquad\qquad\qquad\, \times\, \left.\left[\,\sum_{n_1+\cdots+n_k=n} \frac{|a_{n_1}|^2}{c_\beta(n_1)}\cdots \frac{|a_{n_k}|^2}{c_\beta(n_k)}\right]\right)^\frac{1}{2} \\
		&=\left(\sum_{n=0}^\infty c_\beta(n) c_{k\beta}(n)\sum_{n_1+\cdots+n_k=n}\frac{|a_{n_1}|^2}{c_\beta(n_1)}\cdots \frac{|a_{n_k}|^2}{c_\beta(n_k)}\right)^\frac{1}{2} \\
		&\leq \left(\sum_{n=0}^\infty \sum_{n_1+\cdots + n_k = n} c_{k\beta}(n_1) |a_{n_1}|^2 \cdots c_{k\beta}(n_k) |a_{n_k}|^2 \right)^\frac{1}{2} = \|f\|_{D_{k\beta}}^k, 
	\end{align*}
	and we are done. 
\end{proof}

Let us now turn to a direct connection between the inequalities \eqref{eq:burbea} and \eqref{eq:dual}. We will work here in the pairing of $L^2(\mathbb{T})$, and view therefore the Hardy space $H^p$ as a closed subspace of $L^p(\mathbb{T})$. Through the Hahn--Banach theorem, this gives that the dual space $(H^p)^\ast$ is equal (with identical norm) to the quotient space $L^q/(L^q\ominus H^q)$, where $1/p+1/q=1$. Finally, when $1<p<\infty$, we have $L^q/(L^q\ominus H^q)\cong H^q$, but the norms are not equal. The best general bounds we can give are
\[\|g\|_{L^q/(L^q\ominus H^q)} \leq \|g\|_{H^q} \leq \frac{1}{\sin(\pi/q)}\|g\|_{L^q/(L^q\ominus H^q)},\]
the upper (sharp) bound is from \cite{HV00}. Let $P$ denote the Riesz projection from $L^2(\mathbb{T})$ to $H^2$. It is well-known that $P$ extends to a bounded operator from $L^q(\mathbb{T})$ to $H^q$, and the result from \cite{HV00} gives that the norm is $(\sin(\pi q))^{-1}$.

We now compare the dual of \eqref{eq:burbea} (taken here only for $1\leq p \leq 2$) with \eqref{eq:dual} (taken here only for $2\leq p \leq 4$). Clearly, the dual space of $A^2_\alpha$ is isometrically isomorphic to $D_\alpha$ in this pairing, so we get that 
\begin{align*}
	\|f\|_{\left(H^p\right)^\ast} &\leq \|f\|_{D_{2/p}}, \qquad 1\leq p \leq 2, \\
	\|f\|_{H^q} &\leq \|f\|_{D_{q/2}}, \qquad 2 \leq q\leq 4. 
\end{align*}

In particular, this means that \eqref{eq:genupper} holds with $\beta=2(1-1/q)$, but it is not contractive unless $q=2$ since $2(1-1/q)<q/2$. Nevertheless, setting $2/p=q/2$, we conjecture a contractive relationship between $H^q$ and the Riesz projection of $L^r(\mathbb{T})$ where
\[q = \frac{4}{p} = 4\left(1-\frac{1}{r}\right).\]
A result from \cite{MZ11} states that the Riesz projection $P\colon L^\infty(\mathbb{T}) \to H^4$ is contractive and that $q=4$ is sharp. Hence we arrive at the following. 
\begin{conj}\label{conj:riesz} 
	The Riesz projection $P$ is a contraction from $L^r(\mathbb{T})$ to $H^q$ with $q = 4(1-1/r)$ for $1<r \leq \infty$. If $f$ is in $L^1(\mathbb{T})$, then
	\begin{equation} \label{eq:rieszL1}
		\exp\left(\int_0^{2\pi}\log|Pf(e^{i\theta})|\,\frac{d\theta}{2\pi}\right) \leq \|f\|_{L^1}.
	\end{equation}
\end{conj}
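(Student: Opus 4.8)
The plan is to take the family of contractive estimates $\|Pf\|_{H^q}\le \|f\|_{L^r}$ with $q=4(1-1/r)$ as the main object and to obtain the displayed inequality \eqref{eq:rieszL1} as its $r\to 1^+$ endpoint. Two instances are already available as anchors: the orthogonal projection gives the isometric case $r=q=2$, and \cite{MZ11} gives the sharp endpoint $r=\infty$, $q=4$. For the reduction of \eqref{eq:rieszL1}, I would use that for every $q>0$ the geometric mean is dominated by the $H^q$ mean,
\[\exp\left(\int_0^{2\pi}\log|Pf|\,\frac{d\theta}{2\pi}\right)\le \|Pf\|_{H^q},\]
by Jensen's inequality. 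Since the left-hand side does not depend on $r$, combining this with the family and letting $r\to 1^+$ (so that $q=4(1-1/r)\to 0^+$ and $\|f\|_{L^r}\to \|f\|_{L^1}$ for bounded $f$, then approximating a general $f\in L^1$) yields \eqref{eq:rieszL1}. Thus the entire content lies in the family for $1<r\le\infty$.

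The naive route to the family---complex interpolation between the two known endpoints---does not suffice, and I expect this to be the main obstacle. Beyond the loss of constants noted in Section~\ref{sec:related} (the dual of $H^q$ is $H^{q'}$ only up to $1/\sin(\pi/q)$, and the same defect appears in $[H^2,H^4]_\theta\cong H^q$), there is a more serious gap: even granting isometric interpolation, the Riesz--Thorin curve falls short of the conjecture. Writing $t=1/r$, the endpoints $(1/r,1/q)=(0,1/4)$ and $(1/2,1/2)$ force $q_i=4/(1+2t)$, whereas the conjecture asserts contractivity up to $q_c=4(1-t)$; since $(1-t)(1+2t)=1+t(1-2t)>1$ for $0<t<1/2$, we have $q_c>q_i$ throughout the open interval. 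The conjectured curve therefore lies strictly above anything two-endpoint interpolation can reach.

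The realistic strategy is to manufacture a continuum of sharp endpoints directly on the curve $q=4(1-1/r)$, rather than interpolating two of them, in the spirit of Lemma~\ref{lem:interpolation}, where the $H^2$ hub is fixed and only the weight moves. Concretely I would attempt a Stein-type analytic family $T_s$ that absorbs a power $|f|^{s}$ into the density and is calibrated so that both endpoints of the three-lines argument sit on the curve itself, forcing the constant to be $1$. Failing a direct argument, the fallback is the duality bridge made explicit in the text: the dual of Conjecture~\ref{conj:burbea} produces, for analytic $g$, a representative $F\in L^r$ with $PF=g$ and $\|F\|_{L^r}\le \|g\|_{D_{q/2}}$, so the present conjecture is intertwined with Conjectures~\ref{conj:burbea} and \ref{conj:dual}. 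I expect the crux to be exactly the same analytic-decomposition barrier described in Section~\ref{sec:related}: one needs a convexity device that remains isometric at every interior point of the curve $q=4(1-1/r)$, which is precisely the difficulty that keeps Burbea's conjecture open.
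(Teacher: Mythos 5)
The statement you were asked to prove is Conjecture~\ref{conj:riesz}; the paper does not prove it, and neither do you. Your proposal is a correct map of the surrounding terrain rather than a proof, and to its credit it reproduces the paper's own observations accurately: the reduction of \eqref{eq:rieszL1} to the case $1<r\le 1+\delta$ via the geometric-mean limit \eqref{eq:lim0} is exactly the remark the authors make after \eqref{eq:zygmund}; your computation that two-endpoint Riesz--Thorin between $(r,q)=(2,2)$ and $(\infty,4)$ only reaches $q=4/(1+2t)=4r/(r+2)$ matches the paper's explicit comparison $4r/(r+2)<4(1-1/r)$; and the known anchors $r=2$, $r=\infty$ (and, by the self-adjointness argument \eqref{eq:43}, $r=4/3$) are the same ones the authors record. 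One caution on your ``fallback'': in the paper the duality bridge (Theorem~\ref{thm:connect}) uses Conjecture~\ref{conj:riesz} as a \emph{hypothesis} to deduce Conjecture~\ref{conj:dual}; it does not offer a route to proving Conjecture~\ref{conj:riesz} from Conjecture~\ref{conj:burbea}, so that direction of the intertwining cannot serve as a proof strategy here.

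The genuine gap is that the central claim --- contractivity of $P\colon L^r\to H^q$ along the whole curve $q=4(1-1/r)$ --- is left entirely unestablished, and your proposed repair does not survive scrutiny as stated. If you ``manufacture a continuum of sharp endpoints on the curve'' and then run a three-lines argument between any two of them, you land on the chord joining them in the $(1/r,1/q)$-plane; since $t\mapsto 1/(4(1-t))$ is strictly convex, that chord lies strictly above the curve, i.e.\ the interpolated exponent $q$ is again strictly smaller than $4(1-1/r)$ at every interior point. So the obstruction you correctly diagnose for the two known endpoints recurs for \emph{any} pair of endpoints on the curve; only a genuinely varying Stein family (which you do not construct, and for which no candidate is known) could escape it. This is consistent with the status of the statement in the paper: what is actually proved there is only the necessity direction, $q\le 4(1-1/r)$ (Theorem~\ref{thm:rieszneccessary}), together with the isolated contractive cases $r\in\{4/3,2,\infty\}$ and the duality symmetry of Lemma~\ref{lem:dualexp}; the sufficiency remains open.
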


Here follow several remarks pertaining to Conjecture~\ref{conj:riesz}. Let us begin by making note of two observations regarding \eqref{eq:rieszL1}. A classical theorem (see~\cite[Chap.~VII.2]{Zygmund}) states that there is an absolute constant $C\geq1$ such that for every $0<q<1$,
\begin{equation} \label{eq:zygmund}
	\|Pf\|_{H^q} \leq \frac{C}{1-q}\|f\|_{L^1}.
\end{equation}
As far as we know, the best constant $C_q$ in \eqref{eq:zygmund} is not known. Suppose now that $g$ is in $H^q$ for some $q>0$. Then
\begin{equation} \label{eq:lim0}
	\lim_{q \to 0^-} \|g\|_{H^q} = \exp\left(\int_0^{2\pi}\log|g(e^{i\theta})|\,\frac{d\theta}{2\pi}\right),
\end{equation}
which means that \eqref{eq:rieszL1} can be interpreted as saying that \eqref{eq:zygmund} is contractive in the limit $q \to 0^-$. Moreover, from \eqref{eq:lim0} we also see that if the statement of Conjecture~\ref{conj:riesz} holds for $1<r\leq1+\delta$, for some positive $\delta$, then the statement from $r=1$ follows by taking said limit.

Since the Riesz projection is a self-adjoint operator, the above-mentioned result from \cite{MZ11} gives that the Riesz projection is contractive from $L^{4/3}(\mathbb{T})$ to $H^1$. Indeed, if $g$ is  in $L^\infty(\mathbb{T})$, then
\begin{equation} \label{eq:43}
	\left|\langle Pf, g \rangle_{L^2(\mathbb{T})}\right| = \left| \langle f, Pg \rangle_{L^2(\mathbb{T})}\right| \leq \|f\|_{L^{4/3}}\|Pg\|_{H^4} \leq \|f\|_{L^{4/3}}\|g\|_{L^\infty}.
\end{equation}
We have therefore verified that Conjecture~\ref{conj:riesz} is true for $r=4/3$. This argument is a special case of the following result, where we use the notation $r^{\ast}$ for the conjugate exponent to $r$ when $1\le r \le \infty$, i.e. $1/r+1/r^{\ast}=1$. 
\begin{lem} \label{lem:dualexp}
	Suppose that Conjecture~\ref{conj:riesz} holds for some $r$ and that $q = 4(1-1/r)>1$. Then Conjecture~\ref{conj:riesz} also holds with $r = q^\ast$ and $q = r^\ast$. In particular, Conjecture~\ref{conj:riesz} holds in the interval $2 \leq r \leq \infty$ if and only if it holds in the interval $4/3 \leq r \leq 2$.
\end{lem}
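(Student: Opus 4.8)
The plan is to generalize the computation already carried out in \eqref{eq:43} for the pair $(r,q)=(\infty,4)$, which is precisely the lemma in the case $r=\infty$. The mechanism there is the self-adjointness of $P$ combined with the $L^p$--$L^{p^\ast}$ duality on $\mathbb{T}$, and I would run the same three-line argument with general admissible exponents. Throughout I write $\tilde r := q^\ast$ and $\tilde q := r^\ast$ for the proposed new pair.

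First I would dispose of the bookkeeping of exponents. Writing $q = 4(1-1/r)$ gives $1/r = 1 - q/4$, hence $1/r^\ast = 1-1/r = q/4$ and $1/q^\ast = (q-1)/q$. A direct substitution then verifies that the new pair again satisfies the admissibility relation,
\[
4\left(1 - \frac{1}{\tilde r}\right) = 4\left(1 - \frac{1}{q^\ast}\right) = \frac{4}{q} = r^\ast = \tilde q.
\]
I would also record that the hypothesis $q>1$ forces $1<\tilde r\le\infty$ and $1\le\tilde q<\infty$, with $\tilde q=1$ occurring exactly when $r=\infty$; thus the new pair lies in the admissible range $1<\tilde r\le\infty$ and $H^{\tilde q}$ is a genuine Banach space.

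The core is the duality step. For $f\in L^{\tilde r}=L^{q^\ast}$ I would first identify $\|Pf\|_{H^{\tilde q}}$ with the $L^{\tilde q}$ norm of the boundary values of $Pf$, and then, since $(\tilde q)^\ast=(r^\ast)^\ast=r$, write
\[
\|Pf\|_{H^{\tilde q}} = \sup_{\|g\|_{L^r}\le 1}\left|\langle Pf, g\rangle_{L^2(\mathbb{T})}\right|.
\]
For each admissible $g$ I would invoke self-adjointness to pass to $\langle f, Pg\rangle$, apply H\"older with the conjugate pair $(q^\ast,q)$, and finish with the assumed contractivity $P\colon L^r\to H^q$:
\[
\left|\langle Pf, g\rangle\right| = \left|\langle f, Pg\rangle\right| \le \|f\|_{L^{q^\ast}}\,\|Pg\|_{H^q} \le \|f\|_{L^{q^\ast}}\,\|g\|_{L^r} \le \|f\|_{L^{q^\ast}}.
\]
Taking the supremum over $g$ yields $\|Pf\|_{H^{\tilde q}}\le\|f\|_{L^{\tilde r}}$, which is Conjecture~\ref{conj:riesz} for $(\tilde r,\tilde q)$. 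For the ``in particular'' clause I would note that $r\mapsto\tilde r=q^\ast$ is an involution (applying it twice returns $(\tilde q)^\ast=r$) fixing $r=2$, sending $\infty\mapsto 4/3$ and $4/3\mapsto\infty$, so it interchanges $[2,\infty]$ and $[4/3,2]$ and validity on one interval is equivalent to validity on the other.

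The main obstacle is not any single estimate but the careful justification of $\langle Pf, g\rangle = \langle f, Pg\rangle$ away from the Hilbert space setting: here $f$ need not lie in $L^2$, and in the borderline case $r=\infty$ the projection $P$ is unbounded on $L^\infty$. I would settle this by density, verifying the identity on trigonometric polynomials and extending it using that $P$ is bounded on $L^{q^\ast}$ (valid since $1<q^\ast<\infty$) to handle the factor $Pf$, while the factor $Pg$ is controlled directly through the hypothesis $P\colon L^r\to H^q\subset L^q$; the H\"older pairings then remain finite throughout the approximation.
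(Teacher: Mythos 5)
Your proposal is correct and follows essentially the same route as the paper: the paper's proof consists of the same duality/self-adjointness argument (``arguing as in \eqref{eq:43}'') together with the observation that the relation $q=4(1-1/r)$, rewritten as $qr^\ast=4$, is preserved under $(r,q)\mapsto(q^\ast,r^\ast)$. Your additional care about justifying $\langle Pf,g\rangle=\langle f,Pg\rangle$ by density is a reasonable elaboration of a step the paper leaves implicit, but it does not change the argument.
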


\begin{proof}
	Arguing as in \eqref{eq:43}, we get that $P \colon L^r(\mathbb{T}) \to H^q$ is contractive if and only if $P \colon L^{q^\ast}(\mathbb{T}) \to H^{r^\ast}$ is contractive. We now rewrite the relationship $q = 4(1-1/r)$ as
	\begin{equation} \label{eq:dual4}
		qr^\ast = 4.
	\end{equation}
	If \eqref{eq:dual4} holds for $q$ and $r$, then it also holds for $q = r^\ast$ and $r = q^\ast$, since $(q^{\ast})^\ast = q$.
\end{proof}

By complex interpolation between the contractive cases $P\colon L^\infty(\mathbb{T}) \to H^4$ and $P\colon L^2(\mathbb{T}) \to H^2$, it was found in \cite{MZ11} that the Riesz projection is contractive from $L^r(\mathbb{T})$ to $H^{4r/(r+2)}$ for $2 \leq r \leq \infty$. Furthermore, by arguing as in Lemma~\ref{lem:dualexp}, we find that the Riesz projection is contractive from $L^r(\mathbb{T})$ to $H^{2r/(4-r)}$ when $4/3 \leq r \leq 2$. However, note that
\begin{align*}
	\frac{4r}{r+2}&<4\left(1-\frac{1}{r}\right), \qquad 2<r<\infty, \\
	\frac{2r}{4-r}&<4\left(1-\frac{1}{r}\right), \qquad \frac{4}{3}<r<2,
\end{align*}
so again we observe that interpolation gives something weaker than our conjecture. Let us now see that $q\leq4(1-1/r)$ in Conjecture~\ref{conj:riesz} is necessary. 
\begin{thm}\label{thm:rieszneccessary} 
	Let $1<r<\infty$. If the Riesz projection $P$ is contractive from $L^r(\mathbb T)$ to $H^q$, then $q\leq 4(1-1/r)$. There is no $q>0$ such that $P$ is contractive from $L^1(\mathbb T)$ to $H^q$. 
\end{thm}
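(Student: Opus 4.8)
The plan is to establish the two necessary conditions by testing the contractive inequality on carefully chosen families of functions and examining the behavior as a parameter tends to a critical value. This is the same strategy already used elsewhere in the paper to derive necessary conditions, and the natural test functions are powers of the Szeg\H{o}-type kernels.

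For the first claim, suppose $P \colon L^r(\mathbb{T}) \to H^q$ is contractive with $1<r<\infty$. The idea is to feed $P$ a function whose positive-frequency part is a binomial kernel and whose negative-frequency part can be discarded by the projection. Concretely, I would consider test functions built from $|1-\rho e^{i\theta}|^{-2s}$ for $0<\rho<1$, since on the circle this factors as $(1-\rho e^{i\theta})^{-s}(1-\rho e^{-i\theta})^{-s}$; the Riesz projection of such a product isolates the analytic factor $(1-\rho z)^{-s}$ up to lower-order terms. The $L^r$ norm of the full symmetric kernel and the $H^q$ norm of the analytic piece are both computable asymptotically as $\rho \to 1^-$ using the standard fact that $\int_0^{2\pi}|1-\rho e^{i\theta}|^{-a}\,\frac{d\theta}{2\pi}$ blows up like $(1-\rho)^{1-a}$ when $a>1$. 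Balancing the rates of blow-up of the two sides forces an inequality between the exponents, and choosing $s$ optimally (so that both norms diverge at matched rates) should pin down exactly the relation $q \le 4(1-1/r)$. The main obstacle will be controlling the contribution of the discarded negative-frequency and cross terms after projection, i.e. verifying that the projected kernel really is asymptotically $(1-\rho z)^{-s}$ to leading order and that the error does not corrupt the dominant rate; this requires an honest asymptotic expansion of the Fourier coefficients rather than a soft argument.

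For the second claim, that no $q>0$ makes $P$ contractive from $L^1(\mathbb{T})$ to $H^q$, I would argue that contractivity from $L^1$ is simply too strong to survive. The cleanest route is to take the $r \to 1^+$ limit of the exponent relation: since $4(1-1/r) \to 0$ as $r \to 1^+$, any putative $L^1 \to H^q$ contraction would force $q \le 0$, which is impossible for a genuine Hardy space exponent. To make this rigorous without circular reliance on the first part, I would instead exhibit a concrete sequence: using an approximate identity or a sequence of kernels concentrating at a point, one can keep $\|f_n\|_{L^1}$ bounded while $\|Pf_n\|_{H^q}$ diverges for every fixed $q>0$, contradicting contractivity. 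This reflects the well-known failure of the Riesz projection to map $L^1$ boundedly into any $H^q$ with a uniform constant; the quantitative version \eqref{eq:zygmund} with its factor $1/(1-q)$ already signals that the $L^1$ endpoint degenerates.

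The heart of the matter is the first statement, and within it the delicate asymptotic bookkeeping for the projected kernel. Once the leading-order asymptotics of both $\|f\|_{L^r}$ and $\|Pf\|_{H^q}$ are secured as $\rho \to 1^-$, the exponent inequality falls out by comparing powers of $(1-\rho)$; the optimization over the free parameter $s$ is then a short calculus check. I expect the second statement to follow quickly, either by the limiting argument or by the concentrating-kernel construction, so I would present the kernel asymptotics in full and treat the $L^1$ endpoint as a corollary.
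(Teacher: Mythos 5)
Your strategy for the first claim---blowing up a kernel as $\rho\to1^-$ and comparing rates of divergence---cannot reach the sharp threshold $q\le 4(1-1/r)$, because in the critical range that threshold is \emph{not} a boundedness phenomenon. For $1<r<\infty$ and $q\le r$ the Riesz projection is bounded from $L^r(\mathbb{T})$ to $H^q$ (indeed $\|Pf\|_{H^q}\le\|Pf\|_{H^r}\le(\sin(\pi/r))^{-1}\|f\|_{L^r}$ by the Hollenbeck--Verbitsky bound quoted in Section~\ref{sec:dual}), while $4(1-1/r)<r$ for every $r\ne2$. So on the whole interval $4(1-1/r)<q\le r$ the ratio $\|Pf\|_{H^q}/\|f\|_{L^r}$ is uniformly bounded over \emph{all} $f$, and no family of kernels can produce a mismatch of blow-up rates there; what fails is contractivity (a constant exceeding $1$), not boundedness. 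Concretely, for $f_\rho=|1-\rho e^{i\theta}|^{-2s}$ both $\|f_\rho\|_{L^r}$ and $\|Pf_\rho\|_{H^q}$ scale like $(1-\rho)^{-2s}$ times $(1-\rho)^{1/r}$ and $(1-\rho)^{1/q}$ respectively, so the parameter $s$ cancels from the exponent comparison and there is no ``optimal $s$''; matching rates forces $q=r$, and you would then be reduced to computing the limiting constant exactly, which is not what your sketch does. The argument that actually works is a second-order perturbation of the constant function: the paper takes $f(z)=(1-\varepsilon z)(1-\varepsilon\bar z)^{2/r-1}$, for which $|f|^r=|1-\varepsilon e^{i\theta}|^2$ exactly and $Pf(z)=1-(1-2/r)\varepsilon^2-\varepsilon z$ exactly, and the condition $q\le4(1-1/r)$ drops out of the $\varepsilon^2$ terms. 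This is the same device used earlier in the paper (the $f=1+\varepsilon z$ test for \eqref{eq:abineq}); contractive thresholds are detected near constants, not near singular kernels.

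The second claim has the same problem in your preferred route. By Kolmogorov's inequality \eqref{eq:zygmund}, $P$ \emph{is} bounded from $L^1(\mathbb{T})$ to $H^q$ for every $0<q<1$, so there is no sequence with $\|f_n\|_{L^1}$ bounded and $\|Pf_n\|_{H^q}\to\infty$ in that range---and $0<q<1$ is precisely where the statement has content, since for $q\ge1$ even boundedness fails. On the other hand, the limiting argument you set aside as possibly circular is in fact sound and non-circular: on the probability space $\mathbb{T}$ one has $\|f\|_{L^1}\le\|f\|_{L^r}$, so contractivity from $L^1$ implies contractivity from every $L^r$ with $r>1$, and the first part then forces $q\le\inf_{r>1}4(1-1/r)=0$. (The paper simply runs its $\varepsilon$-expansion with $r=1$, where $\varrho=-1$ and the condition becomes $q\le0$.) As written, neither half of your proposal closes.
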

\begin{proof}
	Fix $1\le r<\infty$ and consider the function $f(z)=(1-\varepsilon z)/(1-\varepsilon \bar{z})^{1-2/r}$ for some small $\varepsilon>0$. Then
\[\|f\|_{L^r} = \left(\int_0^{2\pi} |1-\varepsilon e^{i\theta}|^2\,\frac{d\theta}{2\pi}\right)^{1/r} = 1+ \frac{1}{r} \varepsilon^2 + O(\varepsilon^4).\]
	Note that $Pf(z)=1-\varrho\varepsilon^2 - \varepsilon z$ with $\varrho = 1-2/r$. Hence 
	\begin{align*}
		\|Pf\|_{H^q} & = \left( \int_0^{2\pi} | 1-\varrho\varepsilon^2 - \varepsilon e^{i\theta}|^q\, \frac{d\theta}{2\pi} \right)^{1/q} \\
		& = ( 1-\varrho\varepsilon^2 ) \left( \int_0^{2\pi} \left| \left(1 - \frac{\varepsilon}{1-\varrho\varepsilon^2 } e^{i\theta} \right)^{q/2} \right|^2\,\frac{d\theta}{2\pi} \right)^{1/q} \\
		& = ( 1-\varrho\varepsilon^2 )\left(1+\frac{q}{4}\frac{\varepsilon^2}{(1-\varrho\varepsilon^2)^2} +O(\varepsilon^4)\right) = 1-\varrho\varepsilon^2 + \frac{q}{4}\varepsilon^2 + O(\varepsilon^4). 
	\end{align*}
	As $\varepsilon\to0$, contractivity implies that
	\[1+ \frac{1}{r} \varepsilon^2 \geq 1-\varrho\varepsilon^2 + \frac{q}{4}\varepsilon^2 \qquad \Longleftrightarrow \qquad \frac{q}{4} \leq \frac{1}{r}+\varrho = 1-\frac{1}{r},\]
	so we conclude that $q\leq 4(1-1/r)$ is necessary for contractivity. Moreover, when $r=1$, there is no $q>0$ such that $P$ is contractive from $L^1(\mathbb{T})$ to $H^q$. 
\end{proof}

Let us end this section by making precise the relationship between Conjecture~\ref{conj:burbea}, Conjecture~\ref{conj:dual}, and Conjecture~\ref{conj:riesz}.
\begin{thm}\label{thm:connect} 
	Suppose that Conjecture~\ref{conj:burbea} holds for $1 < p < 2$ and that Conjecture~\ref{conj:riesz} holds for $2 < r < \infty$. Then Conjecture~\ref{conj:dual} holds for $2 < q<\infty$. 
\end{thm}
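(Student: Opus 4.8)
The plan is to establish \eqref{eq:dual} first on the restricted range $2<q<4$ by composing the two conjectures through the Riesz projection in a duality argument, and then to bootstrap to all $2<q<\infty$ using the multiplicativity of Lemma~\ref{lem:dualext}. Throughout I will use the isometric duality between $D_\beta$ and $A^2_\beta$ in the $L^2(\mathbb{T})$ pairing, which at the level of Taylor coefficients is just the weighted Cauchy--Schwarz inequality with weights $c_\beta(n)$.

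First I would fix $2<q<4$ and a function $f(z)=\sum_{n\ge0}a_n z^n$ in $H^q$; we may assume $f\in D_{q/2}$, since otherwise \eqref{eq:dual} is trivial. Since $f\in L^q(\mathbb{T})$ has only nonnegative frequencies, the $L^q$--$L^{q^\ast}$ duality gives $\|f\|_{H^q}=\sup|\langle f,g\rangle_{L^2(\mathbb{T})}|$, the supremum taken over all $g\in L^{q^\ast}(\mathbb{T})$ with $\|g\|_{L^{q^\ast}}\le1$. Because $f$ is analytic, $\langle f,g\rangle=\langle f,Pg\rangle$, so both factors now carry only nonnegative frequencies and the $L^2(\mathbb{T})$ pairing coincides with the $\ell^2$ pairing of Taylor coefficients. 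The $D_{q/2}$--$A^2_{q/2}$ Cauchy--Schwarz inequality then yields
\[\left|\langle f,Pg\rangle\right|\le \|f\|_{D_{q/2}}\,\|Pg\|_{A^2_{q/2}},\]
so it remains to show $\|Pg\|_{A^2_{q/2}}\le1$.

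For this I would chain the two conjectures. Applying Conjecture~\ref{conj:burbea} with $2/p=q/2$, that is $p=4/q$, gives $\|Pg\|_{A^2_{q/2}}\le\|Pg\|_{H^{4/q}}$; and applying Conjecture~\ref{conj:riesz} with $r=q^\ast$ gives $\|Pg\|_{H^s}\le\|g\|_{L^{q^\ast}}\le1$, where $s=4(1-1/q^\ast)=4/q$. The two exponents match, $s=4/q$, so the chain closes and $\|f\|_{H^q}\le\|f\|_{D_{q/2}}$. The essential bookkeeping is that $2<q<4$ makes $p=4/q\in(1,2)$, exactly the assumed range for Conjecture~\ref{conj:burbea}, while $r=q^\ast\in(4/3,2)$; the latter is not the assumed range $2<r<\infty$ for Conjecture~\ref{conj:riesz}, but Lemma~\ref{lem:dualexp} converts the assumed range into the dual range $4/3<r<2$, which is what we use.

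Finally I would extend from $2<q<4$ to all $2<q<\infty$. The inequality just proved is \eqref{eq:genupper} for the pair $(q,q/2)$, so Lemma~\ref{lem:dualext} promotes it to every pair $(kq,kq/2)$ with $k$ a positive integer, which is precisely \eqref{eq:dual} for the exponent $kq$. Since $\bigcup_{k\ge1}(2k,4k)$ is all of $(2,\infty)$ except the single point $q=4$, and $q=4$ is an even integer for which \eqref{eq:dual} is already known, this covers the full range. I expect the only genuine obstacle to be the parameter bookkeeping rather than any single analytic estimate: the target weight $q/2$ in \eqref{eq:dual} forces $p=4/q$ in Conjecture~\ref{conj:burbea}, and the matching requirement $s=4/q$ then forces $r=q^\ast$ in Conjecture~\ref{conj:riesz}, so all three exponents are rigidly linked. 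This rigidity is exactly what confines the direct argument to $(2,4)$ and makes the multiplicative bootstrap of Lemma~\ref{lem:dualext} essential rather than merely convenient.
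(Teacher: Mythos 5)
Your proof is correct and follows the same architecture as the paper's: reduce to $2<q<4$ via Lemma~\ref{lem:dualext}, then close that range by chaining Conjecture~\ref{conj:riesz} and Conjecture~\ref{conj:burbea} through the $L^2(\mathbb{T})$ duality. The only difference is which factor you dualize. The paper applies Conjecture~\ref{conj:riesz} directly with $r=4/(4-q)\in(2,\infty)$ to get $\|f\|_{H^q}\leq\inf_{Pg=f}\|g\|_{L^{(4/q)^\ast}}=\|f\|_{(H^{4/q})^\ast}$ and then invokes the \emph{dual} of Conjecture~\ref{conj:burbea}; you instead apply Conjecture~\ref{conj:burbea} directly to $Pg$ and use the \emph{dualized} Riesz conjecture at $r=q^\ast\in(4/3,2)$, which costs you an explicit appeal to Lemma~\ref{lem:dualexp} that the paper's version does not need. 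These are transposes of the same inequality, so nothing is gained or lost mathematically; your unfolded version is somewhat more self-contained (it makes the weighted Cauchy--Schwarz pairing between $D_{q/2}$ and $A^2_{q/2}$ explicit rather than citing the abstract quotient-norm identification), and your bookkeeping at the end --- covering $(2,\infty)\setminus\{4\}$ by $\bigcup_k(2k,4k)$ and handling $q=4$ as an even integer --- is more careful than the paper's one-line reduction.
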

\begin{proof}
	It is sufficient to verify that Conjecture~\ref{conj:dual} holds for $2<q<4$, by Lemma~\ref{lem:dualext}. For $2<q<4$, we use Conjecture~\ref{conj:dual} with $r=4/(4-q)$ and take the infimum over all $g \in L^r(\mathbb{T})$ such that $Pg=f$. Hence we get
	\[\|f\|_{H^q} \leq \inf_{Pg = f} \|g\|_{L^{4/(4-q)}} = \|f\|_{(H^{4/q})^\ast} \leq \|f\|_{D_{q/2}},\]
	where the final inequality is the (the dual of) Corollary~\ref{conj:burbea}. 
\end{proof}

\section{Numerical evidence} \label{sec:numerical} 

Conjecture~\ref{conj:burbea} has been tested by generating random polynomials $f$ and computing with such functions. More precisely, we fixed the degree and picked the coefficients $a_n$ of the polynomials independently with a normal distribution centered at $0$ and with variance $c_{2/p}(n)$. We then computed the $L^p(\mathbb T)$ of $f$ norm with an adaptative Gauss--Kronrod quadrature. We tested the conjecture against several thousand polynomials successfully.

As for Conjecture~\ref{conj:measure}, we have some numerical evidence that the integral in \eqref{eq:radial} is increasing with $1/\lambda$. We took several thousand normalized random polynomials as before. For each polynomial $f$ we found the maximum of $(1-|z|^2)|f(z)|^2$. We composed each polynomial with a M\"obius transformation in such a way that the new function $g$ had the property that the maximum of $(1-|z|^2)|g(z)|^2$ is attained at the origin. For fixed $\theta$ and $\lambda$, it is possible to solve numerically  the equation $|g(r e^{i\theta})|^2(1-r^2) = \lambda$ and thus we computed numerically the integral in \eqref{eq:radial} for many values of $0<\lambda<|g(0)|$, and we have checked that it actually increases with $1/\lambda$. We have also checked numerically that Question~\ref{que:decreasing} has a positive answer for $1<\alpha<2$ for many random polynomials.

Conjecture~\ref{conj:dual} has been tested successfully in a similar way as Conjecture~\ref{conj:burbea}. To test Conjecture~\ref{conj:riesz}, we considered random trigonometric polynomials of the form
\[f(z) = \sum_{n=-M}^{N} a_n z^n\]
for different values of $M,N>0$ and independent coefficients with standard normal distribution. We computed numerically the $L^r(\mathbb T)$ norm of $f$ and the $L^q(\mathbb T)$ norm of its Riesz projection
\[Pf(z)=\sum_{n=0}^{N} a_n z^n.\]
We observed that the Riesz projection was contractive for $q \ge  4(1-1/r)$. Moreover, in the limiting case $r=1$, we observed that
\[ \exp\left(\int_0^{2\pi} \log |Pf(e^{i\theta})|\, \frac{d\theta}{2\pi}\right) \le \int_0^{2\pi} |f(e^{i\theta})|\,\frac{d\theta}{2\pi}. \]

\bibliographystyle{amsplain} 
\bibliography{contractive} \end{document}